\title{Remarks on modulation spaces and homogeneity}
\newcommand{\scal}[2]{\langle #1,#2\rangle}
\newcommand{\nn}[1]{\mathbf N^{#1}}
\newcommand{\rr}[1]{\mathbf R^{#1}}
\newcommand{\cc}[1]{\mathbf C^{#1}}
\newcommand{\nm}[2]{\Vert #1\Vert _{#2}}
\newcommand{\sets}[2]{\{ \, #1\, ;\, #2\, \} }
\newcommand{\Sets}[2]{\left \{ \, #1\, ;\, #2\, \right \} }
\newcommand{\ep}{\varepsilon}
\newcommand{\cdo}{\, \cdot \, }
\newcommand{\eabs}[1]{\langle #1\rangle}
\newcommand{\vrum}{\vspace{0.1cm}}
\newcommand{\Arg}{\operatorname{Arg}}
\newcommand{\maclH}{\mathcal H}
\newcommand{\maclS}{\mathcal S}
\newcommand{\mascF}{\mathscr F}
\newcommand{\mascS}{\mathscr S}
\numberwithin{equation}{section}          
\newtheorem{thm}{Theorem}
\numberwithin{thm}{section}
\newcommand{\rubrik}{}
\newtheorem{prop}[thm]{Proposition}
\newtheorem{lemma}[thm]{Lemma}
\theoremstyle{definition}
\newtheorem{defn}[thm]{Definition}
\theoremstyle{remark}
\newtheorem{rem}[thm]{Remark}              
\title{Fourier characterizations of
Pilipovi{\'c} spaces}
\keywords{Fractional Fourier transforms,
Bargmann transform, multi-dimensional
Phragm{\'e}n-Linde{\"o}f's theorem}
\subjclass[2010]{46F05; 43A32; 32A25; 32A36}
\author{Joachim Toft}
\address{Department of Mathematics,
Linn{\ae}us University, V{\"a}xj{\"o}, Sweden}
\email{joachim.toft@lnu.se}
\author{Anupam Gumber}
\address{NuHAG, Faculty of Mathematics,
University of Vienna, Vienna, Austria}
\email{anupam.gumber@univie.ac.at}
\begin{document}

\begin{abstract}
Let $f$ be a function or distribution on $\rr d$.
We show that $f$ belongs to a certain Pilipovi{\'c} space,
if and only if $f$ and suitable partial fractional Fourier
transforms of $f$ satisfy certain types of estimates.
\end{abstract}

\maketitle

\section{Introduction}

\par

In the paper we find characterizations of
Pilipovi{\'c} spaces in terms of estimates
of suitable fractional Fourier transforms
of the involved functions.

\par

%
%
%
%
%
%
%
Pilipovi{\'c} spaces is a family of function
spaces 
which contains any standard Fourier invariant 
Gelfand-shilov space.
The Pilipovi{\'c} spaces $\maclH _s(\rr d)$ for
$s\ge 0$ and $\maclH _{0,s}(\rr d)$ for $s>0$,
contain all finite linear combinations of
Hermite functions and
are dense in the Schwartz space $\mascS (\rr d)$
(See \cite{Ho1} or Section \ref{sec1} for notations.)
One has
$$
\maclH _{s_1}(\rr d) \subseteq \maclH _{0,s_2}(\rr d)
\subseteq \maclH _{s_2}(\rr d) ,\qquad 0\le s_1<s_2.
$$
The Pilipovi{\'c} spaces increase with the parameter
$s$ above and are strongly related to the
Gelfand-Shilov spaces $\maclS _{s}(\rr d)$ and
$\Sigma _{s}(\rr d)$
of Roumieu and Beurling types, respectively,
which consist of all $f\in \mascS (\rr d)$ such that
\begin{equation}\label{Eq:GSCond}
\sup _{x\in \rr d}
\sup _{\alpha ,\beta \in \nn d}
\left (
\frac {|x^\alpha D^\beta f(x)|}
{h^{|\alpha +\beta |}(\alpha !\beta !)^s}
\right ) <\infty 
\end{equation}
holds true for some $h>0$ respectively
for every $h>0$.
In fact, we have
\begin{alignat}{4}
\maclH _{s_1}(\rr d) &= \maclS _{s_1}(\rr d), &
\quad
\maclH _{0,s_2}(\rr d) &= \Sigma _{s_2}(\rr d), &
s_2&>s_1\ge \frac 12
\label{Eq:IntroIdentPilGSSpaces1}
\intertext{but}
\maclH _{s_1}(\rr d) &\neq \maclS _{s_1}(\rr d)
= \{ 0 \} , &
\quad
\maclH _{0,s_2}(\rr d) &\neq  \Sigma _{s_2}(\rr d)
= \{ 0 \}, &
\quad
s_1&<s_2\le \frac 12 .
\label{Eq:IntroIdentPilGSSpaces2}
\end{alignat}
In particular, it follows that the functions
in Pilipovi{\'c} spaces obey strong
ultra-differentiability conditions and
strong exponential type decay bounds at infinity,
because similar facts hold true for Gelfand-Shilov
spaces.

\par

There are several characterizations of
Gelfand-Shilov spaces. One of the most important
is described in the following proposition,
which characterize such spaces in terms of estimates
of the involved elements and their Fourier transforms,
established by Eijndhoven in \cite{Eij}
and Chung, Chung and Kim in \cite{ChuChuKim}.

\par

\begin{prop}\label{Prop:IntroGSSpaceChar}
Let $s\ge 0$
and $f\in \mascS '(\rr d)$.
Then the following conditions are equivalent:
\begin{enumerate}
\item $f\in \maclS _s(\rr d)$
($f\in \Sigma _s(\rr d)$);

\vrum

\item $|f(x)|\lesssim e^{-r|x|^{\frac 1s}}$
and $|f(\xi )|\lesssim e^{-r|\xi |^{\frac 1s}}$
for some $r>0$ (for every $r>0$);
\end{enumerate}
\end{prop}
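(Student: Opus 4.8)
The plan is to exploit the fact that both the Gelfand--Shilov condition \eqref{Eq:GSCond} and the pair of estimates in (2) are symmetric under the Fourier transform, so that each of the two decay bounds in (2) encodes ``one half'' of \eqref{Eq:GSCond}. Throughout I treat the Roumieu case $\maclS _s(\rr d)$ (``for some $r$''); the Beurling case $\Sigma _s(\rr d)$ is obtained verbatim by interchanging ``for some $h$/$r$'' with ``for every $h$/$r$''.

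For $(1)\Rightarrow(2)$ I would first specialise \eqref{Eq:GSCond} to $\beta=0$, giving the moment bounds $|x^\alpha f(x)|\lesssim h^{|\alpha|}(\alpha!)^s$ for all $\alpha\in\nn d$. Optimising over $|\alpha|$ by Stirling's formula --- the elementary estimate $\sup_{t\ge0}t^{N}e^{-rt^{1/s}}\asymp (s/(re))^{sN}N^{sN}\asymp h^{N}(N!)^{s}$ --- converts these moment bounds into the decay $|f(x)|\lesssim e^{-r|x|^{1/s}}$ and, run backwards, yields the converse for $f$. Since $\maclS _s(\rr d)$ is invariant under the Fourier transform and $\widehat{x^\alpha D^\beta f}=i^{|\beta|}(-D_\xi)^\alpha(\xi^\beta\widehat f)$, the same reasoning applied to $\widehat f$ gives $|\widehat f(\xi)|\lesssim e^{-r|\xi|^{1/s}}$, which is (2).

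For the substantive direction $(2)\Rightarrow(1)$ I would reverse the moment step on each side: the decay of $f$ gives $\norm{x^\alpha f}_{L^\infty}\lesssim h^{|\alpha|}(\alpha!)^s$, while the decay of $\widehat f$ gives $\norm{\xi^\beta\widehat f}_{L^1}\lesssim h^{|\beta|}(\beta!)^s$ and hence, by Fourier inversion, $\norm{D^\beta f}_{L^\infty}\lesssim h^{|\beta|}(\beta!)^s$. (This already forces $f$ to be a genuine smooth function, reconciling the hypothesis $f\in\mascS '(\rr d)$ with the pointwise statements in (2).) It remains to combine these two \emph{separate} families of estimates into the \emph{joint} bound \eqref{Eq:GSCond}, and this is exactly where the main work lies.

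The combination step is the expected obstacle, and I would handle it by complex analysis. When $s<1$ the bound $|\widehat f(\xi)|\lesssim e^{-r|\xi|^{1/s}}$ forces $f$ to extend to an entire function $F$ on $\cc d$ with growth $|F(x+iy)|\lesssim e^{c|y|^{1/(1-s)}}$, while on $\rr d$ one has $|F(x)|\lesssim e^{-r|x|^{1/s}}$. A multi-dimensional Phragm\'en--Lindel\"of argument, applied variable by variable, then upgrades these two one-sided bounds to the simultaneous estimate $|F(x+iy)|\lesssim e^{-r'|x|^{1/s}}e^{c'|y|^{1/(1-s)}}$; Cauchy's estimates on polydiscs of radius $\sim|\beta|^{1-s}$ centred at real points, followed by absorption of the polynomial factor $x^\alpha$ into the $x$-decay, finally give $|x^\alpha D^\beta f(x)|\lesssim h^{|\alpha+\beta|}(\alpha!\beta!)^s$, which is \eqref{Eq:GSCond}. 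For $s\ge1$, where $f$ need not be analytic, I would instead invoke the Gelfand--Shilov factorization lemma --- equivalently, the characterization through powers of the harmonic oscillator $|x|^2-\Delta$ --- to pass from the separate multiplication and differentiation bounds to the joint one. Finally, in the degenerate range $s<\tfrac12$, where $\maclS _s(\rr d)=\{0\}$ by \eqref{Eq:IntroIdentPilGSSpaces2}, the two decay conditions have exponent $1/s>2$ and a Hardy-type uncertainty principle forces $f\equiv0$, so that (1) and (2) agree trivially. The delicate point throughout is the uniformity of the constants in the Phragm\'en--Lindel\"of step across the $d$ complex directions.
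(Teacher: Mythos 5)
The paper does not actually prove this proposition: it is quoted from Eijndhoven \cite{Eij} and Chung--Chung--Kim \cite{ChuChuKim}, so your argument has to stand on its own. Your easy direction and your reduction of (2)$\Rightarrow$(1) to the two separate families $\nm{x^\alpha f}{L^\infty}\lesssim h^{|\alpha |}(\alpha !)^s$ and $\nm{D^\beta f}{L^\infty}\lesssim h^{|\beta |}(\beta !)^s$ are fine, and you correctly identify the fusion of these into the joint bound \eqref{Eq:GSCond} as the crux. The genuine gap is your branch for $s\ge 1$. The harmonic-oscillator characterization (Proposition \ref{Prop:PilSpacChar1}) asks for a bound on $\nm{H_d^Nf}{L^\infty}$, and expanding $(|x|^2-\Delta )^N$ produces exactly the mixed monomials $x^\alpha D^\beta$ (in various orders) that you are trying to control, so invoking it here is circular. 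What you call the ``Gelfand--Shilov factorization lemma'' is presumably Kashpirovskii's theorem that the joint and separate definitions give the same space; that theorem \emph{is} the combination step, it is not equivalent to the harmonic-oscillator characterization, and if you may cite it then your complex-analytic work for $s<1$ is superfluous too. Either cite the combination theorem for all $s$, or prove it; the elementary route for $s\ge 1$ (where analyticity fails) is Landau--Kolmogorov interpolation of derivatives on shrinking intervals, which you do not mention. Your Phragm{\'e}n--Lindel{\"o}f plan for $\tfrac 12\le s<1$ is workable in outline, but the comparison functions $e^{\delta z^{1/s}}$ only control sectors of opening $<\pi s$ and you only have the good bound on the real axis, so the sector decomposition and the uniformity in the $d$ variables need exactly the kind of care the paper spends Appendix \ref{AppA} on.

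A much shorter proof of (2)$\Rightarrow$(1), using only what the paper already states, bypasses the combination problem entirely. With $\phi (x)=\pi ^{-d/4}e^{-|x|^2/2}$ one has $|V_\phi f(x,\xi )|\le \int |f(y)|\, |\phi (y-x)|\, dy\lesssim e^{-r'|x|^{1/s}}$ from the decay of $f$, and, by the standard identity $|V_\phi f(x,\xi )|=|V_{\widehat \phi}\widehat f(\xi ,-x)|$ together with $\widehat \phi =\phi$, also $|V_\phi f(x,\xi )|\lesssim e^{-r'|\xi |^{1/s}}$ from the decay of $\widehat f$. The geometric mean of the two bounds gives $|V_\phi f(x,\xi )|\lesssim e^{-\frac {r'}2(|x|^{1/s}+|\xi |^{1/s})}$, and Proposition \ref{stftGelfand2} then yields $f\in \maclS _s(\rr d)$ (resp. $\Sigma _s(\rr d)$) for all $s\ge \tfrac 12$ in one stroke, with no case split on analyticity and with the Roumieu/Beurling quantifiers carried along automatically. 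Your disposal of $s<\tfrac 12$ via Hardy's uncertainty principle is correct and completes the picture.
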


\par

Pilipovi{\'c} spaces can be defined in similar
ways when $s\ge 0$ is real. In fact, for such
$s$, Pilipovi{\'c}
spaces can be defined by replacing
the operators $x^\alpha$ and $D^\beta$
in \eqref{Eq:GSCond} with $H^{\frac 12}$,
where $H=|x|^2-\Delta _x$ is the harmonic
oscillator. That is, \eqref{Eq:GSCond}
should be replaced by
\begin{equation}\label{Eq:PilCond}
\sup _{x\in \rr d}
\sup _{N\ge 0}
\left (
\frac {|H^Nf(x)|}
{h^{N}N!^{2s}}
\right ) <\infty .
\end{equation}

\par

In the
following proposition we
characterize of Pilipovi{\'c} spaces
in terms of estimates
of the Hermite coefficients and
the short-time Fourier transform $V_\phi f$
of $f$.
Here we recall that any
$f\in \mascS '(\rr d)$
may in a unique way be expressed as a
Hermite series expansion
\begin{equation}\label{Eq:PilCond2}
f(x)= \sum _{\alpha \in \nn d}c(f;\alpha )
h_\alpha ,
\end{equation}
where $h_\alpha$ is the Hermite function of
order $\alpha \in \nn d$.

\par

\begin{prop}\label{Prop:IntroPilSpaceChar}
Let $s\ge 0$, $\phi (x) = \pi ^{-\frac d4}
e^{-\frac 12|x|^2}$,
\begin{equation}\label{Eq:WeightFunctionIntro1}
\vartheta _{r,s}(x,\xi )
=
\begin{cases}
e^{-r(|x|^{\frac 1s}+|\xi |^{\frac 1s})},
& s>\frac 12,\ r>0,
\\[1ex]
e^{-(\frac 14-r)(|x|^2+|\xi |^2)},
& s=\frac 12,\ 0<r<\frac 14,
\\[1ex]
e^{-\frac 14(|x|^2+|\xi |^2)
+r(\log (1+|x|+|\xi |))^{\frac 1{1-2s}}},
& s<\frac 12,\ r>0,
\end{cases}
\end{equation}
$f\in \mascS (\rr d)$ and let $c(f;\alpha )$
be given by \eqref{Eq:PilCond2}.
Then the following conditions are equivalent:
\begin{enumerate}
\item $f\in \maclH _s(\rr d)$
($f\in \maclH _{0,s}(\rr d)$);

\vrum

\item \eqref{Eq:PilCond} holds
for some $h>0$ (for every $h>0$);

\vrum

\item $|c(f;\alpha )|\lesssim e^{-r|\alpha |^{\frac 1{2s}}}$
for some $r>0$ (for every $r>0$);

\vrum

\item $|V_\phi f(x,\xi )|
\lesssim \vartheta _{r,s}(x,\xi )$
for some $r>0$ (for every $r>0$).
\end{enumerate}
\end{prop}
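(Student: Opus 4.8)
The plan is to treat (1)$\Leftrightarrow$(2) as the definitional reformulation recalled in the introduction: for real $s\ge 0$ the Pilipovi\'c space $\maclH _s(\rr d)$ (resp.\ $\maclH _{0,s}(\rr d)$) is by construction the space singled out by replacing $x^\alpha D^\beta$ in \eqref{Eq:GSCond} with $H^{1/2}$, i.e.\ by \eqref{Eq:PilCond}, so this equivalence is inherited from \cite{Ho1}. The two genuine equivalences to establish are then (2)$\Leftrightarrow$(3) and (3)$\Leftrightarrow$(4), and throughout I would work with the Hermite expansion \eqref{Eq:PilCond2} and the fact that $H$ acts diagonally, $H^Nh_\alpha=(2|\alpha|+d)^Nh_\alpha$.

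For (2)$\Leftrightarrow$(3) I would expand $H^Nf=\sum_\alpha (2|\alpha|+d)^N c(f;\alpha)h_\alpha$ and use the polynomial sup-norm bounds for the $h_\alpha$ together with the polynomial count of $\{\alpha:|\alpha|=k\}$ to reduce the $L^\infty$ bound \eqref{Eq:PilCond} to a scalar comparison between the $N!^{2s}$-type growth of the harmonic-oscillator seminorms and the decay of the coefficients. The decisive ingredient is the elementary optimisation
\begin{equation*}
\sup_{k\ge 0} k^N e^{-r k^{1/(2s)}} \asymp C_r^{\,N}\, N!^{\,2s},
\end{equation*}
a Stirling/Legendre-transform identity (with $C_r=(2s/r)^{2s}$) that converts $N!^{2s}$-growth of $\|H^Nf\|$ into $e^{-r|\alpha|^{1/(2s)}}$-decay of the Hermite coefficients and back. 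The Roumieu (``for some $h$/$r$'') and Beurling (``for every $h$/$r$'') versions then follow by tracking the quantifiers through this optimisation.

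For (3)$\Leftrightarrow$(4) I would pass to the Bargmann transform $f\mapsto \mathfrak V f$, which sends $h_\alpha$ to the normalised monomial $z^\alpha/\sqrt{\alpha!}$ and is related to the Gaussian-window short-time Fourier transform by an identity of the form $|V_\phi f(x,\xi)|\asymp e^{-(|x|^2+|\xi|^2)/4}\,|\mathfrak V f(z)|$ with $z=\tfrac1{\sqrt2}(x-i\xi)$, so that $|z|^2=(|x|^2+|\xi|^2)/2$. Under this correspondence, (3) becomes a growth bound for the entire function $\mathfrak V f(z)=\sum_\alpha c(f;\alpha)z^\alpha/\sqrt{\alpha!}$, and (4) becomes exactly a matching growth bound once the Gaussian prefactor is divided out. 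I would then carry out the translation between Taylor-coefficient decay and maximal growth separately in the three regimes encoded by $\vartheta _{r,s}$: for $s>\tfrac12$ one recovers pure exponential decay $e^{-r(|x|^{1/s}+|\xi|^{1/s})}$, consistent with $\maclH _s=\maclS _s$ and Proposition \ref{Prop:IntroGSSpaceChar}; for $s=\tfrac12$ the image has Fock-type growth $e^{r'|z|^2}$ with $r'<\tfrac12$, giving the weakened Gaussian $e^{-(1/4-r)(|x|^2+|\xi|^2)}$; and for $s<\tfrac12$ it has subexponential growth $e^{r(\log(1+|z|))^{1/(1-2s)}}$, producing the logarithmic-power correction to the full Gaussian $e^{-(|x|^2+|\xi|^2)/4}$.

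I expect the main obstacle to be precisely this last step in the borderline and sub-critical ranges $s\le\tfrac12$, where the Gelfand--Shilov spaces degenerate to $\{0\}$ and the soft order/type theory of entire functions no longer suffices. There one must quantify sharply how the coefficient decay $e^{-r|\alpha|^{1/(2s)}}$ controls the maximum modulus $M(\rho)=\max_{|z|=\rho}|\mathfrak V f(z)|$, including the exact form of the exponent (the constant $\tfrac14-r$ at $s=\tfrac12$, and the power $1/(1-2s)$ of the logarithm for $s<\tfrac12$). This is a Wiman--Valiron/Phragm\'en--Lindel\"of type estimate, and pinning down the constants and the anisotropic $x,\xi$-dependence of $\vartheta _{r,s}$ — rather than merely the order of growth — is the technical heart of the argument.
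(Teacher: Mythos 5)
The paper does not actually prove this proposition: it attributes (1)$\Leftrightarrow$(2) to the definitions, (1)$\Leftrightarrow$(3) to \cite{Pil1,Pil2,Toft18}, and (1)$\Leftrightarrow$(4) to \cite{GZ,Toft18}, so there is no in-paper argument to compare yours against. Your outline --- the diagonal action of $H$ on Hermite functions plus the Legendre-transform identity $\sup_k k^N e^{-rk^{1/(2s)}}\asymp C_r^N N!^{2s}$ for (2)$\Leftrightarrow$(3), and the Bargmann-transform translation of coefficient decay into maximal growth in the three regimes for (3)$\Leftrightarrow$(4) --- is correct and is essentially the argument carried out in those references; the only adjustments worth noting are that in the paper's own formal setup (Definition \ref{DefclHclASpaces}) it is (3), not (2), that is the definition of $\maclH_s(\rr d)$, and that for $s\ge\frac12$ the equivalence (1)$\Leftrightarrow$(4) already follows from Proposition \ref{stftGelfand2} together with \eqref{Eq:IntroIdentPilGSSpaces1}, so the genuinely new content you must supply is the range $s<\frac 12$ that you correctly identify as the technical heart.
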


\par

In Proposition \ref{Prop:IntroPilSpaceChar},
the equivalence between (1) and (2) essentially
follows from the definitions, the
equivalence between (1) and (3)
was established in \cite{Pil1,Pil2}
for $s\ge \frac 12$ and in
\cite{Toft18} for general $s$.
The equivalence between (1) and (4)
was established in \cite{GZ} by Gr{\"o}chenig
and Zimmermann for $\maclH _s(\rr d)$ when $s\ge
\frac 12$ and for $\maclH _{0,s}(\rr d)$ when $s>
\frac 12$. For general $s$, the equivalence
between
(1) and (4) is obtained in \cite{Toft18}.  

\par

We observe that in view of 
\eqref{Eq:IntroIdentPilGSSpaces1},
it follows that Proposition 
\ref{Prop:IntroPilSpaceChar}
give some further characterizations for 
Gelfand-Shilov spaces.

\par

Despite Proposition 
\ref{Prop:IntroPilSpaceChar}
contains several characterizations for
Pilipovi{\'c} spaces, there are no
characterization of the form Proposition
\ref{Prop:IntroGSSpaceChar} in terms of
estimates of the involved functions and their
Fourier transforms. By modifying the
weight function $\vartheta _{r,s}(x,\xi )$ in
\eqref{Eq:WeightFunctionIntro1} into
\begin{equation}\label{Eq:WeightFunctionIntro2}
\omega _{r,s}(x)
=
\begin{cases}
e^{-r|x|^{\frac 1s}},
& s>\frac 12,\ r>0,
\\[1ex]
e^{-(\frac 12-r)|x|^2},
& s=\frac 12,\ 0<r<\frac 12,
\\[1ex]
e^{-\frac 12|x|^2
+r(\log (1+|x|))^{\frac 1{1-2s}}},
& s<\frac 12,\ r>0,
\end{cases}
\end{equation}
it follows by some manipulations that
\begin{align}
f\in \maclH _s(\rr d)\ (f\in \maclH _{0,s}(\rr d))
\quad &\Rightarrow \quad
|f(x)|\lesssim 
\omega _{r,s}(x),\ |\widehat f(\xi )|
\lesssim \omega _{r,s}(\xi ),
\label{Eq:IntroPilFourRel}
\intertext{
for some $r>0$ (for every $r>0$).
More generally, since $\maclH _s(\rr d)$
and $\maclH _{0,s}(\rr d)$ are invariant
under any (partial) fractional Fourier
transform \eqref{Eq:IntroPilFourRel}
generalizes into}
f\in \maclH _s(\rr d)\ (f\in \maclH _{0,s}(\rr d))
\quad &\Rightarrow \quad
|(\mascF _tf)(x)|\lesssim 
\omega _{r,s}(x),
\quad t\in \rr d,
\tag*{(\ref{Eq:IntroPilFourRel})$'$}
\intertext{for some $r>0$ (for every $r>0$).
(See Proposition \ref{Prop:PilSpGivesFTEst}
in Section \ref{sec2}.)
\indent
The originally searched result was to show that
equivalence occurs in \eqref{Eq:IntroPilFourRel},
which should give characterizations
of Pilipovi{\'c} spaces in terms of convenient
estimates of the involved functions and their
Fourier transforms. Unfortunately, those
techniques which we are aware of, seem to be
insufficient for such equivalence. On the other
hand, by some straight-forward estimates it turns
out that equivalence occurs in
\eqref{Eq:IntroPilFourRel}$'$ i.{\,}e.}
f\in \maclH _s(\rr d)\ (f\in \maclH _{0,s}(\rr d))
\quad &\Leftrightarrow \quad
|(\mascF _tf)(x)|\lesssim 
\omega _{r,s}(x),
\quad t\in \rr d,
\tag*{(\ref{Eq:IntroPilFourRel})$''$}
\intertext{for some $r>0$ (for every $r>0$),
giving a weaker form of searched
characterization. In fact, for suitable
lattices $\Lambda \subseteq \rr d$ we improve
\eqref{Eq:IntroPilFourRel}$''$ into}
f\in \maclH _s(\rr d)\ (f\in \maclH _{0,s}(\rr d))
\quad &\Leftrightarrow \quad
|(\mascF _tf)(x)|\lesssim 
\omega _{r,s}(x),
\quad t\in \Lambda ,
\tag*{(\ref{Eq:IntroPilFourRel})$'''$}
\end{align}
for some $r>0$ (for every $r>0$).
(See Theorems \ref{Thm:Mainthm1}
and \ref{Thm:Mainthm1A}.)

\par

In order to reach \eqref{Eq:IntroPilFourRel}$'''$,
we observe that the right implication follows from 
\eqref{Eq:IntroPilFourRel}$'$. The left implication
is managed in a different way, based on a
multi-dimensional version of Phragm{\'e}n-Lindel{\"o}f's
theorem, which seems not so easy to be found in the 
literature.
For this reason we have included an induction
proof of this result in Appendix \ref{AppA}, which
might be of some independent interest.

\par

The paper is organized as follows. In Section \ref{sec1}
we recall the definition and some basic
properties of Gelfand-Shilov spaces and
Pilipovi{\'c} spaces. In Section \ref{sec2} we
show the equivalences \eqref{Eq:IntroPilFourRel}$''$
and \eqref{Eq:IntroPilFourRel}$'''$. In Appendix
\ref{AppA} we deduce multi-dimensional estimates
of Phragm{\'e}n-Lindel{\"o}f type. In
Appendix \ref{AppB} we recall some important links 
between Bargmann transforms and short-time Fourier
transforms with Gaussian windows, and discuss
compositions of such transforms with fractional
Fourier transforms.



%
%

\par

\section*{Acknowledgement}
The first author was supported by
Vetenskapsr{\aa}det (Swedish 
Science Council) within the project 2019-04890.
The second author is supported by the Austrian
Science Fund (FWF) project TAI6.

\par

\section{Preliminaries}\label{sec1}

\par

\subsection{Gelfand-Shilov spaces}\label{subsec1.2}
Let $0<s \in \mathbf R$ be fixed. Then the (Fourier invariant)
Gelfand-Shilov space $\maclS _s(\rr d)$ ($\Sigma _s(\rr d)$) of
Roumieu type (Beurling type) consists of all $f\in C^\infty (\rr d)$
such that
\begin{equation}\label{gfseminorm}
\nm f{\maclS _{s,h}^\sigma}\equiv \sup \frac {|x^\alpha \partial ^\beta
f(x)|}{h^{|\alpha  + \beta |}(\alpha !\, \beta !)^s}
\end{equation}
is finite for some $h>0$ (for every $h>0$). Here the supremum should be taken
over all $\alpha ,\beta \in \mathbf N^d$ and $x\in \rr d$. The semi-norms
$\nm \cdo {\maclS _{s,h}^\sigma}$ induce an inductive limit topology for the
space $\maclS _s(\rr d)$ and projective limit topology for $\Sigma _s(\rr d)$, and
the latter space becomes a Fr{\'e}chet space under this topology.

\par

The space $\maclS _s(\rr d)\neq \{ 0\}$ ($\Sigma _s(\rr d)\neq \{0\}$), if and only if
$s\ge \frac 12$ ($s> \frac 12$).

\par

The \emph{Gelfand-Shilov distribution spaces} $\maclS _s'(\rr d)$
and $\Sigma _s'(\rr d)$ are the dual spaces of $\maclS _s(\rr d)$
and $\Sigma _s(\rr d)$, respectively.

\par

We have
\begin{equation}\label{GSembeddings}
\begin{aligned}
\maclS _{1/2} (\rr d) &\hookrightarrow \Sigma _s  (\rr d) \hookrightarrow
\maclS _s (\rr d)
\hookrightarrow  \Sigma _t(\rr d)
\\[1ex]
&\hookrightarrow
\mascS (\rr d)
\hookrightarrow \mascS '(\rr d) 
\hookrightarrow \Sigma _t' (\rr d)
\\[1ex]
&\hookrightarrow  \maclS _s'(\rr d)
\hookrightarrow  \Sigma _s'(\rr d) \hookrightarrow \maclS _{1/2} '(\rr d),
\quad \frac 12<s<t.
\end{aligned}
\end{equation}
Here and
in what follows we use the notation $A\hookrightarrow B$ when the topological
spaces $A$ and $B$ satisfy $A\subseteq B$ with continuous embeddings.

\par

A convenient family of functions concerns the Hermite functions
$$
h_\alpha (x) = \pi ^{-\frac d4}(-1)^{|\alpha |}
(2^{|\alpha |}\alpha !)^{-\frac 12}e^{\frac {|x|^2}2}
(\partial ^\alpha e^{-|x|^2}),\quad \alpha \in \nn d.
$$
The set of Hermite functions on $\rr d$ is an orthonormal basis for
$L^2(\rr d)$. It is also a basis for the Schwartz space and its distribution space,
and for any $\Sigma _s$ when $s>\frac 12$,
$\maclS _s$ when $s\ge \frac 12$ and their distribution
spaces. They are also eigenfunctions to the Harmonic
oscillator $H=H_d\equiv |x|^2-\Delta$ and to the Fourier transform
$\mathscr F$, given by
$$
(\mathscr Ff)(\xi )= \widehat f(\xi ) \equiv (2\pi )^{-\frac d2}\int _{\rr
{d}} f(x)e^{-i\scal  x\xi }\, dx, \quad \xi \in \rr d,
$$
when $f\in L^1(\rr d)$. Here $\scal \cdo \cdo$ denotes the usual
scalar product on $\rr d$. In fact, we have
$$
H_dh_\alpha = (2|\alpha |+d)h_\alpha .
$$

\par

The Fourier transform $\mathscr F$ extends
uniquely to homeomorphisms on $\mathscr S'(\rr d)$,
$\maclS _s'(\rr d)$ and on $\Sigma _s'(\rr d)$. Furthermore,
$\mascF$ restricts to
homeomorphisms on $\mathscr S(\rr d)$,
$\maclS _s(\rr d)$ and on $\Sigma _s (\rr d)$,
and to a unitary operator on $L^2(\rr d)$. Similar facts hold true
when the Fourier transform is replaced by a partial
Fourier transform.

\par

Gelfand-Shilov spaces and their distribution spaces can also
be characterized by estimates of short-time Fourier
transform, (see e.{\,}g. \cite{GZ,Teof,Toft18}).
More precisely, let $\phi \in \mascS  (\rr d)$ be
fixed.
Then the \emph{short-time
Fourier transform} $V_\phi f$ of $f\in \mascS '
(\rr d)$ with respect to the \emph{window function} $\phi$ is
the Gelfand-Shilov distribution on $\rr {2d}$, defined by
$$
V_\phi f(x,\xi )  =
\mascF (f \, \overline {\phi (\cdo -x)})(\xi ), \quad x,\xi \in \rr d.
$$
If $f ,\phi \in \mascS (\rr d)$, then it follows that
$$
V_\phi f(x,\xi ) = (2\pi )^{-\frac d2}\int _{\rr d} f(y)\overline {\phi
(y-x)}e^{-i\scal y\xi}\, dy, \quad x,\xi \in \rr d.
$$

\par

By \cite[Theorem 2.3]{To11} it follows that the definition of the map
$(f,\phi)\mapsto V_{\phi} f$ from $\mascS (\rr d) \times \mascS (\rr d)$
to $\mascS(\rr {2d})$ is uniquely extendable to a continuous map from
$\maclS _s'(\rr d)\times \maclS_s'(\rr d)$
to $\maclS_s'(\rr {2d})$, and restricts to a continuous map
from $\maclS _s (\rr d)\times \maclS _s (\rr d)$
to $\maclS _s(\rr {2d})$.
The same conclusion holds with $\Sigma _s$ in place of
$\maclS_s$, at each place.

\par

In the following propositions we give characterizations of Gelfand-Shilov
spaces and their distribution spaces in terms of estimates of the short-time Fourier transform.
We omit the proof since the first part follows from \cite[Theorem 2.7]{GZ})
and the second part from \cite[Proposition 2.2]{Toft18}.
See also \cite{CPRT10} for related results.

\par

\begin{prop}\label{stftGelfand2}
Let $s\ge \frac 12$ ($s>\frac 12$), $\phi \in \maclS _s(\rr d)\setminus 0$
($\phi \in \Sigma _s(\rr d)\setminus 0$) and let $f$ be a
Gelfand-Shilov distribution on $\rr d$. Then the following is true:
\begin{enumerate}
\item $f\in \maclS _s (\rr d)$ ($f\in \Sigma_s(\rr d)$), if and only if
\begin{equation}\label{stftexpest2}
|V_\phi f(x,\xi )| \lesssim  e^{-r (|x|^{\frac 1s}+|\xi |^{\frac 1s})}, \quad x,\xi \in \rr d,
\end{equation}
for some $r > 0$ (for every $r>0$).
\item $f\in \maclS _s'(\rr d)$ ($f\in \Sigma _s'(\rr d)$), if and only if
\begin{equation}\label{stftexpest2Dist}
|V_\phi f(x,\xi )| \lesssim  e^{r(|x|^{\frac 1s}+|\xi |^{\frac 1s})}, \quad
x,\xi \in \rr d,
\end{equation}
for every $r > 0$ (for some $r > 0$).
\end{enumerate}
\end{prop}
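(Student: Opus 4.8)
The plan is to combine the mapping properties of the short-time Fourier transform recorded above with the inversion formula, thereby reducing both equivalences to the standard correspondence between Gelfand-Shilov seminorm bounds and exponential decay or growth estimates.

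For the forward direction of (1), suppose $f\in \maclS _s(\rr d)$. Since the window $\phi$ also lies in $\maclS _s(\rr d)$, the continuity statement quoted from \cite{To11} gives $V_\phi f\in \maclS _s(\rr {2d})$. It then suffices to convert membership in $\maclS _s(\rr {2d})$ into the pointwise bound: from the seminorm estimate $|z^\gamma g(z)|\lesssim h^{|\gamma |}(\gamma !)^s$ one obtains, by optimizing over $\gamma \in \nn{2d}$, the decay $|g(z)|\lesssim e^{-r|z|^{\frac 1s}}$. Writing $z=(x,\xi )$ and using $|z|^{\frac 1s}\asymp |x|^{\frac 1s}+|\xi |^{\frac 1s}$ yields \eqref{stftexpest2}.

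For the converse of (1), I would invoke the inversion formula
\[
f = c_\phi \int _{\rr d}\int _{\rr d} V_\phi f(x,\xi )\, e^{i\scal {\cdo}\xi }\,\phi (\cdo -x)\, dx\, d\xi ,
\]
differentiate under the integral sign, and estimate the seminorms $\nm f{\maclS _{s,h}^\sigma}$. Applying $y^\alpha D_y^\beta$ to the integrand transfers the derivatives onto $e^{i\scal y\xi }\phi (y-x)$, producing powers of $\xi$ and derivatives of $\phi$, each controlled by the assumed decay of $V_\phi f$ and by the $\maclS _s$-seminorms of $\phi$. The decisive analytic step is to bound convolution-type integrals of the shape $\int _{\rr d} e^{-r|y-x|^{\frac 1s}}e^{-r|x|^{\frac 1s}}\, dx$ by $Ce^{-r'|y|^{\frac 1s}}$ for a slightly smaller $r'>0$; this rests on the elementary splitting inequalities for $t\mapsto |t|^{\frac 1s}$ (subadditivity when $s\ge 1$, and $|a+b|^{\frac 1s}\le 2^{\frac 1s-1}(|a|^{\frac 1s}+|b|^{\frac 1s})$ when $\frac 12\le s\le 1$). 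Reassembling these estimates shows every seminorm of $f$ is finite, whence $f\in \maclS _s(\rr d)$.

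Part (2) I would treat by duality between $\maclS _s(\rr d)$ and $\maclS _s'(\rr d)$. Defining $V_\phi f$ for a distribution $f$ through the sesquilinear pairing $\scal f{\psi _{x,\xi }}$ with $\psi _{x,\xi }(y)=e^{i\scal y\xi }\phi (y-x)$, the growth bound \eqref{stftexpest2Dist} is precisely dual to the decay bound \eqref{stftexpest2}: pairing $f$ against an arbitrary test function, expanding the latter through the inversion formula, and transferring the weighted estimate converts the growth of $V_\phi f$ into a continuity estimate for the functional $f$, again using the continuity of $(f,\phi )\mapsto V_\phi f$ from \cite{To11}. The Beurling-type statements (``for every $r$''/``for some $r$'') follow by interchanging the quantifier on $r$ at each step. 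I expect the converse in (1) to be the main obstacle: the uniform reabsorption of the exponential weights through the reconstruction integral, so that all seminorms are simultaneously finite, hinges on the precise splitting inequalities for $|\cdot |^{\frac 1s}$, and it is here that the hypothesis $s\ge \frac 12$ (respectively $s>\frac 12$) enters, both guaranteeing nontriviality of the spaces and controlling those power-type estimates.
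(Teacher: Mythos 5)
The paper does not prove this proposition at all: it is stated as a known result, with part (1) attributed to \cite[Theorem 2.7]{GZ} and part (2) to \cite[Proposition 2.2]{Toft18}, so there is no in-paper argument to compare against. Your sketch follows what is essentially the standard proof from those references --- forward direction via the continuity of $(f,\phi )\mapsto V_\phi f$ on $\maclS _s\times \maclS _s$ together with the equivalence between $\maclS _s(\rr {2d})$-membership and $e^{-r|\cdo |^{1/s}}$-decay, converse via the inversion formula with binomial redistribution of $y^\alpha D^\beta$ onto the window and the modulation, and part (2) via duality/Moyal pairing --- and it is correct in outline. Two small points deserve care if you write it out: first, in the converse of (1) you should justify that the absolutely convergent reconstruction integral actually represents the a priori distribution $f$ (weak inversion plus the assumed decay); second, the hypothesis $s\ge \frac 12$ (resp.\ $s>\frac 12$) is not what makes the splitting inequalities for $t\mapsto t^{1/s}$ work --- those hold for all $s>0$ --- it only guarantees that $\maclS _s$ (resp.\ $\Sigma _s$) is nontrivial, so the statement is not vacuous. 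Neither point is a gap in the strategy.
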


\par

\subsection{Spaces of Hermite series expansions}
\label{subsec1.3}

\par

Next we recall the definitions of topological vector spaces of
Hermite series expansions, given in \cite{Toft18}. As in \cite{Toft18},
it is convenient to use suitable extensions of
$\mathbf R_+$ when indexing our spaces.

\par

\begin{defn}
The sets $\mathbf R_\flat$ and $\overline {\mathbf R_\flat}$ are given by
$$
{\textstyle{\mathbf R_\flat = \mathbf R_+ \underset{\sigma >0}{\textstyle{\bigcup}}
\{ \flat _\sigma \} }}
\quad \text{and}\quad
{\textstyle{\overline {\mathbf R_\flat} = \mathbf R_\flat \bigcup \{ 0 \} }}.
$$

\par

Moreover, beside the usual ordering in $\mathbf R$, the elements $\flat _\sigma$
in $\mathbf R_\flat$ and $\overline {\mathbf R_\flat}$ are ordered by
the relations $x_1<\flat _{\sigma _1}<\flat _{\sigma _2}<x_2$, when
$\sigma _1$, $\sigma _2$, $x_1$ and $x_2$ are positive real numbers such that
$x_1<\frac 12$ and $x_2\ge \frac 12$.
\end{defn}

\par

\begin{defn}\label{DefSeqSpaces}
Let $p\in [1,\infty ]$, $s\in {\mathbf R_\flat}$, $r\in \mathbf R$, $\vartheta$
be a weight on $\nn d$, and let
$$
\vartheta _{r,s}(\alpha )\equiv
\begin{cases}
e^{r|\alpha |^{\frac 1{2s}}}, & \text{when}\quad s\in \mathbf R_+,
\\[1ex]
r^{|\alpha |}(\alpha !)^{\frac 1{2\sigma}}, & \text{when}\quad s = \flat _\sigma ,
\quad \qquad \alpha \in \nn d.
\end{cases}
$$
Then,
\begin{enumerate}
\item $\ell _0' (\nn d)$ is the set of all sequences $\{c(\alpha ) \} _{\alpha \in \nn d}
\subseteq \mathbf C$ on $\nn d$;

\vrum

\item $\ell _{0,0}(\nn d)\equiv \{ 0\}$, and $\ell _0(\nn d)$ is the set of all sequences
$\{c(\alpha ) \} _{\alpha \in \nn d}\subseteq \mathbf C$ such that $c(\alpha ) \neq 0$
for at most finite numbers of $\alpha$;

\vrum

\item $\ell ^p_{[\vartheta ]}(\nn d)$ is the Banach space which consists of
all sequences $\{ c(\alpha ) \} _{\alpha \in \nn d} \subseteq \mathbf C$
such that
$$
\nm {\{ c(\alpha ) \} _{\alpha \in \nn d} }{\ell ^p_{[\vartheta ]}}\equiv
\nm {\{ c(\alpha ) \vartheta (\alpha )\} _{\alpha \in \nn d} }{\ell ^p} < \infty ;
$$

\vrum

\item $\ell _{0,s}(\nn d)\equiv \underset {r>0}\bigcap \ell ^p_{[\vartheta _{r,s}]}(\nn d)$
and $\ell _s(\nn d)\equiv \underset {r>0}\bigcup \ell ^p_{[\vartheta _{r,s}]}(\nn d)$, with
projective respective inductive limit topologies of $\ell ^p_{[\vartheta _{r,s}]}(\nn d)$
with respect to $r>0$;

\vrum

\item $\ell _{0,s}'(\nn d)\equiv \underset {r>0}\bigcup
\ell ^p_{[1/\vartheta _{r,s}]}(\nn d)$ and $\ell _s'(\nn d)\equiv \underset {r>0}
\bigcap \ell ^p_{[1/\vartheta _{r,s}]}(\nn d)$, with
inductive respective projective limit topologies of $\ell ^p_{[1/\vartheta _{r,s}]}(\nn d)$
with respect to $r>0$.
\end{enumerate}
\end{defn}

\par

Let $p\in [1,\infty ]$, and let $\Omega _N$ be the set of all
$\alpha \in \nn d$ such that $|\alpha |\le N$. Then the
topology of $\ell _0(\nn d)$ is defined by the inductive
limit topology of the sets
$$
\Sets {\{ c(\alpha ) \} _{\alpha \in \nn d} \in \ell _0'(\nn d)}{c(\alpha ) =0\
\text{when}\ \alpha \notin \Omega _N}
$$
with respect to $N\ge 0$, and whose topology
is given through the semi-norms
\begin{equation}\label{SemiNormsEllSpaces}
\{ c(\alpha ) \} _{\alpha \in \nn d}\mapsto \nm {\{ c(\alpha ) \}
_{|\alpha |\le N} }{\ell ^p(\Omega _N)}.
\end{equation}
It is clear that these topologies are independent of $p$.
Furthermore, the topology of
$\ell _0' (\nn d)$ is defined by the semi-norms \eqref{SemiNormsEllSpaces}.
It follows that $\ell _0'(\nn d)$ is a Fr{\'e}chet space, and that the topology
is independent of $p$.

\par

Next we introduce spaces of formal Hermite series expansions
\begin{alignat}{2}
f&=\sum _{\alpha \in \nn d}c(f,\alpha ) h_\alpha ,&\quad \{ c(f,\alpha ) \}
_{\alpha \in \nn d} &\in \ell _0' (\nn d),\label{Eq:Hermiteseries}
\end{alignat}
which correspond to
\begin{equation}\label{ellSpaces}
\ell _{0,s}(\nn d),\quad \ell _s(\nn d),
\quad \ell _s'(\nn d)\quad \text{and}\quad \ell _{0,s}'(\nn d).
\end{equation}
We consider the mappings
\begin{equation}
\label{T12Map}
T_{\maclH} : \,
\{ c(\alpha )\} _{\alpha \in \nn d} \mapsto \sum _{\alpha \in \nn d}
c(\alpha )h_\alpha
\end{equation}
between sequences and formal Hermite series expansions.

\par

\begin{defn}\label{DefclHclASpaces}
If $s\in \overline{\mathbf R_\flat}$, then
\begin{alignat}{2}
\maclH _{0,s}(\rr d),\quad \maclH _s(\rr d),
\quad \maclH _s'(\rr d)\quad \text{and}\quad \maclH _{0,s}'(\rr d),\label{clHSpaces}
\end{alignat}
are the images of $T_{\maclH}$ respectively in \eqref{T12Map}
of corresponding spaces in \eqref{ellSpaces}.
The topologies of the spaces in \eqref{clHSpaces} 
are inherited from the corresponding spaces in \eqref{ellSpaces}.
\end{defn}

\par


\par

We recall that $f\in \mascS (\rr d)$ if and only if it can be written as
\eqref{Eq:Hermiteseries} such that
$$
|c(f,\alpha ) |\lesssim \eabs \alpha ^{-N},
$$
for every $N\ge 0$ (cf. e.{\,}g. \cite{RS}).
In particular it follows from the definitions that the
inclusions
\begin{multline}\label{inclHermExpSpaces}
\maclH _0(\rr d)\hookrightarrow \maclH _{0,s}(\rr d)\hookrightarrow
\maclH _{s}(\rr d) \hookrightarrow \maclH _{0,t}(\rr d)
\\[1ex]
\hookrightarrow \mascS (\rr d) \hookrightarrow \mascS '(\rr d)
\hookrightarrow
\maclH _{0,t}'(\rr d) \hookrightarrow \maclH _{s}'(\rr d)
\\[1ex]
\hookrightarrow \maclH _{0,s}'(\rr d)\hookrightarrow \maclH _0'(\rr d),
\quad \text{when}\ s,t\in \mathbf R_\flat ,\ s<t,
\end{multline}
are dense.

\par

\begin{rem}\label{Rem:LinkEllHs}
By the definition it follows that $T_{\maclH}$ in \eqref{T12Map} is a homeomorphism
between any of the spaces in \eqref{ellSpaces} and corresponding space
in \eqref{clHSpaces}.
\end{rem}

\par

The next results give some characterizations of $\maclH _s(\rr d)$ and
$\maclH _{0,s}(\rr d)$ when $s$ is a non-negative real number.

\par

\begin{prop}\label{Prop:PilSpacChar1}
Let $0\le s\in \mathbf R$ and let $f\in \maclH _0'(\rr d)$.
Then $f\in \maclH _s(\rr d)$ ($f\in \maclH _{0,s}(\rr d)$), if and only if
$f\in C^\infty (\rr d)$ and satisfies
\begin{equation}\label{GFHarmCond}
\nm{H_d^Nf}{L^\infty}\lesssim h^NN!^{2s},
\end{equation}
for some $h>0$ (every $h>0$). Moreover, it holds
\begin{align*}
\maclH _s(\rr d) &= \maclS _s(\rr d) \neq \{ 0\},
\quad
\maclH _{0,s}(\rr d) = \Sigma _s(\rr d) \neq \{ 0\} 
\quad \text{when}
\quad s\in (\textstyle{\frac 12},\infty),
\\[1ex]
\maclH _s(\rr d) &= \maclS _s(\rr d) \neq \{ 0\},
\quad
\maclH _{0,s}(\rr d) \neq \Sigma _s(\rr d) = \{ 0\} 
\quad \text{when}
\quad s=\textstyle{\frac 12},
\\[1ex]
\maclH _s(\rr d) &\neq \maclS _s(\rr d) = \{ 0\},
\quad
\maclH _{0,s}(\rr d) \neq \Sigma _s(\rr d) = \{ 0\} 
\quad \text{when}
\quad s\in (0,\textstyle{\frac 12}),
\\[1ex]
\maclH _s(\rr d) &\neq \maclS _s(\rr d) = \{ 0\},
\quad
\maclH _{0,s}(\rr d) = \Sigma _s(\rr d) = \{ 0\} 
\quad \text{when}
\quad s=0.
\end{align*}
\end{prop}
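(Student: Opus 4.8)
The plan is to reduce the harmonic-oscillator characterization \eqref{GFHarmCond} to the coefficient description of $\maclH _s$ and $\maclH _{0,s}$: by Definitions \ref{DefSeqSpaces} and \ref{DefclHclASpaces} (and Remark \ref{Rem:LinkEllHs}), $f\in \maclH _s(\rr d)$ ($f\in \maclH _{0,s}(\rr d)$) holds exactly when $|c(f,\alpha )|\lesssim e^{-r|\alpha |^{\frac 1{2s}}}$ for some $r>0$ (for every $r>0$). Using that each Hermite function is an eigenfunction of $H_d$, namely $H_d^Nh_\alpha =(2|\alpha |+d)^Nh_\alpha $, and writing $f=\sum _\alpha c(f,\alpha )h_\alpha $ as in \eqref{Eq:Hermiteseries}, one has $H_d^Nf=\sum _\alpha (2|\alpha |+d)^Nc(f,\alpha )h_\alpha $ and, by self-adjointness of $H_d$ together with $c(f,\alpha )=\scal f{h_\alpha }$, the identity $\scal{H_d^Nf}{h_\alpha }=(2|\alpha |+d)^Nc(f,\alpha )$. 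I would also record the elementary polynomial bounds $\nm{h_\alpha }{L^1}+\nm{h_\alpha }{L^\infty }\lesssim \eabs \alpha ^M$ for some $M=M(d)$; as Pilipovi{\'c} membership forces super-polynomial coefficient decay, such factors are harmless.

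For the implication $\maclH _s(\rr d)\Rightarrow $ \eqref{GFHarmCond}, membership gives $f\in \mascS (\rr d)\subseteq C^\infty (\rr d)$ by \eqref{inclHermExpSpaces} and $|c(f,\alpha )|\lesssim e^{-r|\alpha |^{\frac 1{2s}}}$ for some $r>0$; summing the Hermite series for $H_d^Nf$, grouping by $|\alpha |=k$, and isolating the worst term reduces the bound to the one-variable maximization $\sup _{k\ge 0}(2k+d)^Ne^{-\frac r2k^{\frac 1{2s}}}\lesssim h^NN!^{2s}$, whose optimum sits near $k^{\frac 1{2s}}\sim N$ and, via Stirling, is of order $C^NN^{2sN}e^{-2sN}\sim h^NN!^{2s}$. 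For the converse I would assume $f\in C^\infty $ and \eqref{GFHarmCond}; the coefficient identity then yields
\begin{equation*}
(2|\alpha |+d)^N|c(f,\alpha )|=|\scal{H_d^Nf}{h_\alpha }|\le \nm{H_d^Nf}{L^\infty }\nm{h_\alpha }{L^1}\lesssim h^NN!^{2s}\eabs \alpha ^M
\end{equation*}
for every $N$, and minimising the right-hand side over $N$ (the optimal choice being $N\sim (|\alpha |/h)^{\frac 1{2s}}$) gives, again by Stirling, $|c(f,\alpha )|\lesssim e^{-r|\alpha |^{\frac 1{2s}}}$ for a suitable $r>0$, the factor $\eabs \alpha ^M$ being absorbed into a slightly smaller $r$; hence $f\in \maclH _s(\rr d)$.

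The Beurling statements for $\maclH _{0,s}$ follow from the same two computations with the quantifiers over $h$ and $r$ interchanged, the monotone correspondence $h\leftrightarrow r$ in the Stirling estimates making ``for every $h>0$'' match ``for every $r>0$''. For the concluding identities I would assemble known facts rather than reprove them: the triviality thresholds $\maclS _s(\rr d)=\{0\}\Leftrightarrow s<\frac 12$ and $\Sigma _s(\rr d)=\{0\}\Leftrightarrow s\le \frac 12$ are recalled in Subsection \ref{subsec1.2}; the equalities $\maclH _s=\maclS _s$ for $s\ge \frac 12$ and $\maclH _{0,s}=\Sigma _s$ for $s>\frac 12$ are exactly the Hermite-coefficient characterization of Gelfand-Shilov spaces from \cite{Pil1,Pil2} (equivalence (1)$\Leftrightarrow $(3) of Proposition \ref{Prop:IntroPilSpaceChar}); the non-triviality of $\maclH _s(\rr d)$ and $\maclH _{0,s}(\rr d)$ wherever the matching Gelfand-Shilov space vanishes is checked directly from Definition \ref{DefSeqSpaces} by exhibiting one admissible sequence (e.{\,}g.\ $c(\alpha )=e^{-|\alpha |^{\frac 1{2s}}}$ for the Roumieu case, or a faster-decaying sequence for the Beurling case); and finally $\maclH _0(\rr d)$ is the space of non-zero finite Hermite sums while $\maclH _{0,0}(\rr d)=\{0\}$, both immediate from Definition \ref{DefSeqSpaces}(2).

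The main obstacle is the pair of two-sided Stirling estimates — turning $\sup _kk^Ne^{-rk^{\frac 1{2s}}}$ into $h^NN!^{2s}$ and $\min _Nh^NN!^{2s}(2|\alpha |+d)^{-N}$ into $e^{-r|\alpha |^{\frac 1{2s}}}$ — together with the bookkeeping that converts ``some/every $r$'' into ``some/every $h$'' with controlled constants. The polynomial norm bounds for $h_\alpha $, the interchange of summation, and the convergence of the Hermite series are routine by comparison.
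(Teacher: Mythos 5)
The paper itself contains no proof of this proposition --- it simply refers the reader to \cite{Toft18} --- so there is no internal argument to compare yours against; what you propose is the standard proof and it is essentially correct. Reducing \eqref{GFHarmCond} to the coefficient bound $|c(f,\alpha )|\lesssim e^{-r|\alpha |^{\frac 1{2s}}}$ via $H_dh_\alpha =(2|\alpha |+d)h_\alpha$, with the two Stirling optimizations $\sup _k(2k+d)^Ne^{-rk^{\frac 1{2s}}}\lesssim h^NN!^{2s}$ (optimum at $k\sim N^{2s}$) and $\min _Nh^NN!^{2s}(2|\alpha |+d)^{-N}\lesssim e^{-r|\alpha |^{\frac 1{2s}}}$ (optimum at $N\sim (|\alpha |/h)^{\frac 1{2s}}$), is exactly how this equivalence is established in the literature, and your quantifier bookkeeping $h\leftrightarrow r$ for the Roumieu/Beurling cases is right. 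Two points deserve slightly more care than you give them. First, in the converse direction the identity $\scal {H_d^Nf}{h_\alpha }=(2|\alpha |+d)^Nc(f,\alpha )$ is not purely formal: for $f\in C^\infty \cap L^\infty$ with all $H_d^Nf\in L^\infty$ you must justify the repeated integration by parts against $h_\alpha$ (the Gaussian decay of $h_\alpha$ and its derivatives kills the boundary terms, and the a priori expansion $f=\sum c(f,\alpha )h_\alpha$ for $f\in \maclH _0'(\rr d)$ is supplied by the definition), so this is fixable but should be said. Second, the endpoint $s=0$ is genuinely degenerate in your scheme, since $N!^{2s}=1$ and the minimisation over $N$ no longer produces an exponential; there the correct converse is the observation that $(2|\alpha |+d)^N|c(f,\alpha )|\lesssim h^N\nm {h_\alpha }{L^1}$ for all $N$ forces $c(f,\alpha )=0$ as soon as $2|\alpha |+d>h$, which recovers $\maclH _0(\rr d)$ as the finite Hermite sums and matches your treatment of the last display. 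The assembly of the concluding identities from the known triviality thresholds for $\maclS _s$, $\Sigma _s$ and from the Hermite-coefficient characterization of the Gelfand--Shilov spaces is sound.
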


\par

We refer to \cite{Toft18} for the proof of Proposition \ref{Prop:PilSpacChar1}.

\par

Due to the pioneering investigations related
to Proposition \ref{Prop:PilSpacChar1} by Pilipovi{\'c} in
\cite{Pil1,Pil2}, we call the spaces
$\maclH _s(\rr d)$ and $\maclH _{0,s}(\rr d)$
\emph{Pilipovi{\'c} spaces of Roumieu and
Beurling types}, respectively.
In fact, in the restricted case $s\ge \frac 12$,
Proposition \ref{Prop:PilSpacChar1} was proved
already in \cite{Pil1,Pil2}.

\par

\section{Characterizations of Pilipovi{\'c}
spaces by estimates of the Fourier transform}
\label{sec2}

\par






In this section we deduce characterizations of
Pilipovi{\'c} spaces in terms of estimates
of the involved functions and some of
their fractional Fourier transforms. These
main issues are given in Theorems
\ref{Thm:Mainthm1} and \ref{Thm:Mainthm1A}.

\par

The following characterization of Pilipovi{\'c}
spaces of
lower orders is a straight-forward
consequence of Theorems 5.3, 6.2, (2.25) in 
\cite{Toft18}, and
Theorem 3.2 in \cite{To11}. The details are left
for the reader. Here
\begin{equation}\label{Eq:omegaDef}
\omega _{d,r,s} (z)
=
\begin{cases}
e^{-r\log \eabs z^{\frac 1{1-2s}}},
& s\in \mathbf R,\ 0\le s <\frac 12,
\\[1ex]
e^{-r|z|^{\frac {2\sigma}{\sigma +1}}},
& s=\flat _\sigma ,
\\[1ex]
e^{-r|z|^2}, & s=\frac 12,\quad z\in \cc d .
\end{cases}
\end{equation}
We observe that for every integer $d\ge 1$, $r\ge 0$ and
$s\in \overline{\mathbf R_\flat}$ with $s\le \frac 12$, it follows that
$\omega _{d,r,s}$ in \eqref{Eq:omegaDef} satisfies
$$
C^{-1}\omega _{d,r,s} (x) \le \omega _{d,r,s} (x+y)\le C\omega _{d,r,s} (x)
\quad \text{when}\quad
Rc\le |x|\le c/|y|,
$$
for some $R\ge 2$ and positive constants $c$ and $C$. This is needed for applying
Theorem 3.2 in \cite{To11}.

\par

\begin{prop}\label{Prop:PilSTFTChar}
Let $p\in (0,\infty ]$, $s\in \overline{\mathbf R_\flat}$ be such that $s\le \frac 12$,
$\phi = \pi ^{-\frac d4}e^{-\frac 12|x|^2}$, $f\in \maclH _0'(\rr d)$,
and let $\omega _{d,r,s}$ be given by \eqref{Eq:omegaDef}.
Then the following is true:
\begin{enumerate}
\item if in addition $s<\frac 12$, then $f\in \maclH _s(\rr d)$, if
and only if
\begin{equation}\label{Eq:PilSTFTChar}
\nm {V_\phi f \cdot e^{\frac 14|\cdo |^2}\omega _{2d,r,s}}{L^p}
\end{equation}
is finite for some $r>0$;

\vrum

\item if in addition $s>0$, then $f\in \maclH _{0,s}(\rr d)$, if
and only if \eqref{Eq:PilSTFTChar} is finite for every $r>0$.
\end{enumerate}
\end{prop}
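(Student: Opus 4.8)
The plan is to pass to the Fock--Bargmann side, where the factor $e^{\frac 14|\cdo |^2}$ in \eqref{Eq:PilSTFTChar} acquires a clean meaning, and then to read off everything from the Hermite coefficients $c(f,\alpha )$ in \eqref{Eq:Hermiteseries}. Let $\mathfrak B$ denote the Bargmann transform, so that $\mathfrak B f(z)=\sum _\alpha c(f,\alpha )z^\alpha (\pi ^{d/2}\alpha !)^{-1/2}$; under either hypothesis of the statement this series converges to an entire function on $\cc d$. The starting point is the standard intertwining identity for the Gaussian window $\phi$ recalled in Appendix \ref{AppB}: up to a unimodular factor,
\[
|V_\phi f(x,\xi )|\, e^{\frac 14(|x|^2+|\xi |^2)}
=\pi ^{-d/4}\,|\mathfrak B f(z)|,\qquad z=2^{-1/2}(x-i\xi ),
\]
whence $|z|^2=\frac 12(|x|^2+|\xi |^2)$. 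Since $(x,\xi )\mapsto z$ is, up to a constant, a measure preserving identification of $\rr{2d}$ with $\cc d$, and since $\omega _{2d,r,s}$ depends on $(x,\xi )$ only through $|(x,\xi )|$, the finiteness of \eqref{Eq:PilSTFTChar} is equivalent to $\mathfrak B f\cdot \widetilde\omega _{r,s}\in L^p(\cc d)$, where $\widetilde\omega _{r,s}(z)=\omega _{2d,r,s}(2^{1/2}z)$ is a weight of the same type as in \eqref{Eq:omegaDef}.

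For the forward implications I would start from Definition \ref{DefSeqSpaces} and Remark \ref{Rem:LinkEllHs}, by which $f\in \maclH _s(\rr d)$ (resp.\ $f\in \maclH _{0,s}(\rr d)$) is equivalent to $|c(f,\alpha )|\lesssim \vartheta _{r,s}(\alpha )^{-1}$ for some $r>0$ (resp.\ for every $r>0$). Inserting such a bound into the power series for $\mathfrak B f$ and optimizing by a saddle point computation produces a growth estimate $|\mathfrak B f(z)|\lesssim \omega _{2d,r',s}(z)^{-1}$ for a suitable $r'$; this is exactly the step that forces the exponents $\frac 1{1-2s}$ (for $0\le s<\frac 12$), $\frac{2\sigma }{\sigma +1}$ (for $s=\flat _\sigma$) and $2$ (for $s=\frac 12$) in \eqref{Eq:omegaDef}, and is the quantitative content of Theorems~5.3, 6.2 and formula~(2.25) in \cite{Toft18}. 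Because for $0<s<\frac 12$ the exponent $\frac 1{1-2s}$ strictly exceeds $1$ (and analogously in the $\flat _\sigma$ and $s=\frac 12$ regimes), the product $\mathfrak B f\cdot \widetilde\omega _{r,s}$ decays faster than any power of $\eabs z$ once $r$ exceeds $r'$, so it lies in $L^p(\cc d)$ for \emph{every} $p\in (0,\infty ]$; in the borderline case $s=0$ the weights are only polynomial and one instead takes $r$ large depending on $p$. The quantifier on $r$ flows in the favourable direction in both the Roumieu and the Beurling case, which is why the two parts come out with ``for some $r$'' and ``for every $r$'' respectively.

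For the converse I would recover the coefficient decay from the weighted $L^p$ bound, the only real work being the upgrade from $L^p$ ($p<\infty$) to $L^\infty$. Since $\mathfrak B f$ is entire, $|\mathfrak B f|^p$ is subharmonic, so the sub-mean-value inequality gives $|\mathfrak B f(z_0)|^p\lesssim \delta ^{-2d}\int _{|z-z_0|\le \delta }|\mathfrak B f|^p\, dm$. Choosing $\delta \sim |z_0|^{-1}$ and invoking the local comparison of $\omega _{2d,r,s}$ recorded just before the statement --- valid precisely on the range $Rc\le |x|\le c/|y|$, i.e.\ on balls of radius $\sim |z_0|^{-1}$ --- one replaces $|\mathfrak B f|^p$ by $(|\mathfrak B f|\,\widetilde\omega _{r,s})^p$ at the cost of the factor $\widetilde\omega _{r,s}(z_0)^{-p}$, obtaining $|\mathfrak B f(z_0)|\,\widetilde\omega _{r',s}(z_0)\lesssim \|\mathfrak B f\cdot \widetilde\omega _{r,s}\|_{L^p}$ with a slightly enlarged $r'$, the polynomial factor from $\delta ^{-2d}$ being absorbed because the weights decay super-polynomially when $s>0$. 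This mean value step is the substance of Theorem~3.2 in \cite{To11}, whose hypotheses are exactly the moderateness properties noted in the excerpt. Once this $L^\infty$ growth bound on $\mathfrak B f$ is in hand, Cauchy's estimates on circles of optimized radius --- the saddle point computation run in reverse --- return $|c(f,\alpha )|\lesssim \vartheta _{r'',s}(\alpha )^{-1}$, and Definition \ref{DefSeqSpaces} with Remark \ref{Rem:LinkEllHs} concludes.

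I expect the genuine obstacle to be precisely this converse for $p<\infty$: converting integral control of $\mathfrak B f$ into the pointwise control needed for the Cauchy coefficient estimates, uniformly in the quantifier on $r$. The delicate point is that the admissible balls in the sub-mean-value inequality must shrink like $|z_0|^{-1}$, so one incurs a polynomial loss that can only be absorbed thanks to the super-polynomial decay of the weights in \eqref{Eq:omegaDef} --- which is the reason the moderateness inequality for $\omega _{2d,r,s}$ is recorded in the excerpt. The remaining effort is bookkeeping: carrying the regimes $0\le s<\frac 12$, $s=\flat _\sigma$ and $s=\frac 12$ through the saddle point estimates on an equal footing, and checking that $s=\frac 12$ is legitimately excluded from part~(1) and $s=0$ from part~(2) (in the former the ``for some $r$'' condition with $\widetilde\omega _{r,1/2}=e^{-2r|\cdo |^2}$ would single out $\maclH _{1/2}'(\rr d)$ rather than $\maclH _{1/2}(\rr d)$, while in the latter it would concern only the trivial space $\maclH _{0,0}(\rr d)=\{0\}$).
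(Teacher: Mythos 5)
Your proposal is correct and follows essentially the same route as the paper: the paper gives no written proof, asserting only that the proposition is a straightforward consequence of Theorems 5.3, 6.2 and (2.25) in \cite{Toft18} together with Theorem 3.2 in \cite{To11}, the latter requiring exactly the moderateness property of $\omega _{2d,r,s}$ that you invoke for the sub-mean-value step. Your sketch --- passage to the Bargmann side, coefficient-to-growth estimates via the series expansion, and the $L^p$-to-$L^\infty$ upgrade by subharmonicity on balls of radius $\sim |z_0|^{-1}$ --- is precisely an unwinding of those citations.
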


\par

We have the following, where we show that elements in
$\maclH _0'(\rr d)$ belong to $\maclH _s(\rr d)$ or
$\maclH _{0,s}(\rr d)$, if and only if they satisfy estimates of the
form
\begin{align}
\sup _{t\in \Lambda _{t_0,u}}
\left (
\nm {(\mascF _tf)\cdot e^{\frac 12|\cdo |^2}\omega _{d,r,s}}{L^\infty}
\right )
&<
\infty ,
\notag
\\
\Lambda _{t_0,u} = \sets {t_0+(k_1u_{1},\dots ,k_du_{d})}{0 &\le k_ju_{j}
<2,
\ k_j\in \mathbf Z},
\label{Eq:PilFTChar1}
\intertext{or}
\sup _{t\in \rr d}
\left (
\nm {(\mascF _tf)\cdot e^{\frac 12|\cdo |^2}\omega _{d,r,s}}{L^\infty}
\right )
&<
\infty .
\label{Eq:PilFTChar2}
\end{align}








\begin{thm}\label{Thm:Mainthm1}
Let
$s\in \overline{\mathbf R_\flat}$ be such that $s< \frac 12$,
$t_0\in \rr d$, $u\in (0,1)^d$,
$f\in \maclH _0'(\rr d)$,
and let $\omega _{d,r,s}$ be given by \eqref{Eq:omegaDef}.
Then the following conditions are equivalent:
\begin{enumerate}
\item $f\in \maclH _s(\rr d)$;

\vrum

\item \eqref{Eq:PilFTChar1} holds for some $r>0$;

\vrum

\item \eqref{Eq:PilFTChar2} holds for some $r>0$.
\end{enumerate}
\end{thm}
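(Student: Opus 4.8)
The plan is to close the cycle $(1)\Rightarrow(3)\Rightarrow(2)\Rightarrow(1)$. The implication $(3)\Rightarrow(2)$ is trivial, since $\Lambda _{t_0,u}\subseteq\rr d$ and the supremum in \eqref{Eq:PilFTChar1} is dominated by the one in \eqref{Eq:PilFTChar2}. For $(1)\Rightarrow(3)$ I would use the ``right implication'' recorded in Proposition \ref{Prop:PilSpGivesFTEst}, i.e. \eqref{Eq:IntroPilFourRel}$'$: if $f\in\maclH _s(\rr d)$ then $|(\mascF _tf)(x)|\lesssim e^{-\frac12|x|^2+r'(\log(1+|x|))^{1/(1-2s)}}$ uniformly in $t\in\rr d$ (and analogously in the cases $s=0,\flat _\sigma$). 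Multiplying by $e^{\frac12|x|^2}\omega _{d,r,s}(x)$ and using \eqref{Eq:omegaDef} leaves a factor $e^{(r'-r)(\log\eabs x)^{1/(1-2s)}}$, which is bounded as soon as $r\ge r'$; this is exactly \eqref{Eq:PilFTChar2}.

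The substance is $(2)\Rightarrow(1)$, which I would carry out through the Bargmann transform $\mascB f$. First note that \eqref{Eq:PilFTChar1} forces $\mascF _tf\in L^2(\rr d)$ for each of the finitely many $t\in\Lambda _{t_0,u}$, since the majorant $e^{-\frac12|\cdot|^2}/\omega _{d,r,s}$ is square integrable for every $s<\frac12$; as each $\mascF _t$ is unitary on $L^2$, this gives $f\in L^2(\rr d)$ and hence that $\mascB f$ is entire with the Fock-space bound $|\mascB f(z)|\lesssim e^{\frac12|z|^2}$, in particular of order at most two. Next, by the dictionary between $V_\phi$, $\mascB$ and the partial fractional Fourier transform collected in Appendix \ref{AppB}, composing with $\mascF _t$ amounts to the coordinatewise rotation $\mascB(\mascF _tf)(z)=\mascB f(R_tz)$, where $R_t\colon z\mapsto(e^{i\theta _1(t)}z_1,\dots,e^{i\theta _d(t)}z_d)$. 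Tracking the Gaussian factors in that dictionary, the pointwise bound on $\mascF _tf$ turns into the subexponential estimate $|\mascB f(z)|\lesssim e^{r(\log\eabs z)^{1/(1-2s)}}$ valid on each rotated real subspace $R_t\rr d\subseteq\cc d$, $t\in\Lambda _{t_0,u}$.

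With these two inputs I would invoke the multi-dimensional Phragm\'en-Lindel\"of theorem of Appendix \ref{AppA}. The geometry is dictated by $\Lambda _{t_0,u}$: in each complex coordinate the condition $u\in(0,1)^d$ makes consecutive rotation angles $\theta _j(t)$ closer than $\frac\pi2$, which is precisely the critical sector opening for an entire function of order two, while $0\le k_ju_j<2$ exhausts the rotations of $\rr d$ up to the parity symmetry and hence realizes every rotated real line. Consequently the subspaces $R_t\rr d$ cut $\cc d$ into sectors of opening below $\frac\pi2$, on whose boundaries $\mascB f$ obeys the subexponential majorant while inside it is controlled by the order-two bound $e^{\frac12|z|^2}$. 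Phragm\'en-Lindel\"of then propagates the majorant into every sector, yielding $|\mascB f(z)|\lesssim e^{r(\log\eabs z)^{1/(1-2s)}}$ on all of $\cc d$. Returning through the Appendix \ref{AppB} dictionary this is the short-time Fourier estimate $\nm{V_\phi f\cdot e^{\frac14|\cdot|^2}\omega _{2d,r,s}}{L^\infty}<\infty$, and Proposition \ref{Prop:PilSTFTChar} identifies $f$ as an element of $\maclH _s(\rr d)$.

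I expect the Phragm\'en-Lindel\"of step to be the main obstacle. The classical one-variable principle does not suffice: one must propagate a nonconstant, subexponential majorant simultaneously along all $d$ complex directions and, crucially, control $\mascB f$ on the intersections of the coordinate sectors, where a genuinely $d$-dimensional induction is required. This is the reason for the self-contained argument in Appendix \ref{AppA}. A secondary but necessary point is the exact weight bookkeeping in the Appendix \ref{AppB} dictionary, ensuring that the Gaussian exponent $\frac12|\cdot|^2$ and the logarithmic exponent $(\log\eabs\cdot)^{1/(1-2s)}$ are reproduced without loss when passing between $\mascF _tf$, $V_\phi f$ and $\mascB f$, so that the same weights $\omega _{d,r,s}$ and $\omega _{2d,r,s}$ govern both ends of the argument.
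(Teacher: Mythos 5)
Your proposal follows essentially the same route as the paper: the trivial implication $(3)\Rightarrow(2)$, Proposition \ref{Prop:PilSpGivesFTEst} for $(1)\Rightarrow(3)$, and for $(2)\Rightarrow(1)$ the passage to the Bargmann transform, the bounds on the rotated real subspaces coming from the lattice of fractional Fourier transforms (with the parity relation $\mascF _{2+t}f(x)=\mascF _tf(-x)$ closing up the sectors), and the coordinatewise multi-dimensional Phragm{\'e}n-Lindel{\"o}f argument of Appendix \ref{AppA} combined with the auxiliary-factor trick to propagate the non-constant majorant. Your explicit derivation of the order-two bound from $f\in L^2$ via the Fock-space reproducing kernel is a detail the paper leaves implicit, but otherwise the arguments coincide.
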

\par

\begin{thm}\label{Thm:Mainthm1A}
Let
$s\in \mathbf R_\flat$ be such that
$s\le \frac 12$,
$t_0\in \rr d$, $u\in (0,1]^d$ for $s=\frac 12$ and
$u\in (0,1)^d$ for $s<\frac 12$,
$f\in \maclH _0'(\rr d)$,
and let $\omega _{d,r,s}$ be given by \eqref{Eq:omegaDef}.
Then the following conditions are equivalent:
\begin{enumerate}
\item $f\in \maclH _{0,s}(\rr d)$;

\vrum

\item \eqref{Eq:PilFTChar1} holds for every $r>0$;

\vrum

\item \eqref{Eq:PilFTChar2} holds for every $r>0$.
\end{enumerate}
\end{thm}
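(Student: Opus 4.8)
The plan is to prove the cyclic chain of implications $(1)\Rightarrow(3)\Rightarrow(2)\Rightarrow(1)$, tracking the universal quantifier ``for every $r>0$'' at each step, in close analogy with the Roumieu-type Theorem~\ref{Thm:Mainthm1}. The implication $(3)\Rightarrow(2)$ is immediate: since $\Lambda _{t_0,u}$ is a finite subset of $\rr d$, the supremum in \eqref{Eq:PilFTChar1} is dominated by that in \eqref{Eq:PilFTChar2}, so finiteness for every $r>0$ of the latter forces the same for the former. For $(1)\Rightarrow(3)$ I would use that $\maclH _{0,s}(\rr d)$ is invariant under every partial fractional Fourier transform $\mascF _t$, with bounds uniform in $t\in \rr d$; the Beurling form of the forward estimate \eqref{Eq:IntroPilFourRel}$'$ (the ``for every $r$'' case of Proposition~\ref{Prop:PilSpGivesFTEst}) then yields precisely \eqref{Eq:PilFTChar2} for every $r>0$. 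The borderline value $s=\frac 12$, excluded from Theorem~\ref{Thm:Mainthm1}, is treated on the same footing, the weight there being the Gaussian $\omega _{d,r,1/2}(z)=e^{-r|z|^2}$.

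The substance of the theorem is $(2)\Rightarrow(1)$, which I would argue through the Bargmann transform. By part~(2) of Proposition~\ref{Prop:PilSTFTChar} (taking $p=\infty$) it suffices to show that $\nm {V_\phi f\cdot e^{\frac 14|\cdo |^2}\omega _{2d,r,s}}{L^\infty}$ is finite for every $r>0$. I would rely on the identities of Appendix~\ref{AppB} linking the short-time Fourier transform with Gaussian window, the Bargmann transform $Bf$, and the partial fractional Fourier transforms, so as to reexpress $(\mascF _tf)(x)\,e^{\frac 12|x|^2}$ as the restriction of one fixed entire function (essentially $Bf$, up to a universal Gaussian factor) to the complex direction determined by $t$. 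Under this dictionary the hypothesis \eqref{Eq:PilFTChar1} becomes a family of growth bounds for that entire function along the \emph{finitely many} directions indexed by $\Lambda _{t_0,u}$, valid for every $r>0$.

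The heart of the matter is then to promote these bounds, known only along the finite set of directions $\Lambda _{t_0,u}$, to a bound valid in all directions, i.e.\ over all of phase space. This is exactly where the multi-dimensional Phragm\'en--Lindel\"of theorem of Appendix~\ref{AppA} enters: the constraint $u\in (0,1)^d$ (resp.\ $u\in (0,1]^d$ when $s=\frac 12$) ensures that, as $k_ju_j$ runs over $[0,2)$, the sampled angles subdivide a full period finely enough that each pair of consecutive directions spans a sector within the aperture admissible for the relevant order of growth; the endpoint $u_j=1$ is permissible precisely in the critical case $s=\frac 12$, while strict fineness $u_j<1$ is forced for $s<\frac 12$. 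Applying the theorem by induction on the dimension propagates the directional bounds to a global bound on the entire function, still for every $r>0$. Translating back through Appendix~\ref{AppB} yields finiteness of $\nm {V_\phi f\cdot e^{\frac 14|\cdo |^2}\omega _{2d,r,s}}{L^\infty}$ for every $r>0$, whence $f\in \maclH _{0,s}(\rr d)$ by Proposition~\ref{Prop:PilSTFTChar}.

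I expect the main obstacle to be this propagation step in the Beurling regime: one must check that the Phragm\'en--Lindel\"of estimate, fed by a \emph{finite} set of sampled directions and applied across the several variables, preserves the quantifier ``for every $r>0$'' without producing constants that degenerate as $r\to 0$. Matching the admissible sampling density $u$ to the order-of-growth threshold across the three regimes of $\omega _{d,r,s}$ in \eqref{Eq:omegaDef} (the $\flat _\sigma$, the real $s<\frac 12$, and the critical $s=\frac 12$ cases) is the delicate bookkeeping, and is exactly the reason the Phragm\'en--Lindel\"of result is isolated and proved inductively in Appendix~\ref{AppA}.
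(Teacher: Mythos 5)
Your proposal is correct and follows essentially the same route as the paper: the forward direction via Proposition \ref{Prop:PilSpGivesFTEst}, the trivial domination of \eqref{Eq:PilFTChar1} by \eqref{Eq:PilFTChar2}, and the converse by passing to the Bargmann transform (Proposition \ref{Prop:FTEstGivesWeakPilSpProp} and Appendix \ref{AppB}) and propagating the estimates from the finitely many sampled directions to all of $\cc d$ by the inductive multi-dimensional Phragm\'en--Lindel\"of argument of Appendix \ref{AppA}, concluding with the Bargmann/STFT characterization of $\maclH _{0,s}(\rr d)$. This is exactly the scheme the paper carries out for Theorem \ref{Thm:Mainthm1} and then transfers to the Beurling case by tracking the quantifier in $r$, as you describe.
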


\par






\par



\par

We need some preparations for the proofs.
First we have the following proposition.

\par

\begin{prop}\label{Prop:PilSpGivesFTEst}
Let $s\in \overline{\mathbf R_\flat}$ be such that $s< \frac 12$ ($0<s\le \frac 12$),
$f\in \maclH _s(\rr d)$ ($f\in \maclH _{0,s}(\rr d)$), and let
$\omega _{d,r,s}$ be given by \eqref{Eq:omegaDef}. Then
\begin{equation}\label{Eq:PilSpGivesFTEst}
\sup _{t\in \rr d}
\nm {\mascF _tf \cdot e^{\frac 12|\cdo |^2}\omega _{d,r,s}}{L^\infty}<\infty
\end{equation}
holds for some $r>0$ (for every $r>0$).
\end{prop}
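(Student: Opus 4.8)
The plan is to reduce \eqref{Eq:PilSpGivesFTEst} to a single bound on the Hermite expansion of $f$ that is manifestly independent of $t$, and then to extract the Gaussian factor $e^{-\frac12|\cdot |^2}$ together with the subGaussian correction $\omega _{d,r,s}^{-1}$ from a generating–function estimate for the Hermite functions. First I would record that each $h_\alpha$ is an eigenfunction of every partial fractional Fourier transform, with eigenvalue of modulus $1$. Writing $f=\sum _\alpha c(f,\alpha )h_\alpha$ this gives $\mascF _tf=\sum _\alpha c(f,\alpha )e^{i\theta _t(\alpha )}h_\alpha$ with $\theta _t(\alpha )\in \re$, so that $|c(\mascF _tf,\alpha )|=|c(f,\alpha )|$ for every $t$. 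Consequently
\[
|\mascF _tf(x)|\le \sum _{\alpha \in \nn d}|c(f,\alpha )|\,|h_\alpha (x)|=:S(x),\qquad x\in \rr d,
\]
uniformly in $t\in \rr d$, and it remains to show $S(x)\lesssim e^{-\frac 12|x|^2}\omega _{d,r,s}(x)^{-1}$. By Definitions \ref{DefSeqSpaces} and \ref{DefclHclASpaces} together with Remark \ref{Rem:LinkEllHs}, the hypothesis $f\in \maclH _s(\rr d)$ ($f\in \maclH _{0,s}(\rr d)$) is equivalent to $|c(f,\alpha )|\lesssim \vartheta _{r_0,s}(\alpha )^{-1}$ for some $r_0>0$ (for every $r_0>0$).

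The second step is a pointwise bound on $h_\alpha$ that keeps the Gaussian factor explicit. I would use the identity
\[
\sum _{\alpha \in \nn d}\frac {h_\alpha (x)}{\sqrt{\alpha !}}\,w^\alpha
=\pi ^{-\frac d4}\exp \Big (-\tfrac 12|x|^2+\sqrt 2\,\scal xw-\tfrac 12\scal ww\Big ),\qquad w\in \cc d,
\]
together with a Cauchy estimate on the polytorus $|w_j|=\rho$, which yields for every $\rho >0$
\[
|h_\alpha (x)|\le \pi ^{-\frac d4}(\alpha !)^{\frac 12}\rho ^{-|\alpha |}
\exp \Big (-\tfrac 12|x|^2+\sqrt{2d}\,|x|\rho +\tfrac 12 d\rho ^2\Big ).
\]
Optimising $\rho \sim |\alpha |/|x|$ and applying Stirling's formula gives, up to polynomial and lower–order exponential factors, $|h_\alpha (x)|\lesssim e^{-\frac 12|x|^2}(C|x|)^{|\alpha |}(\alpha !)^{-\frac 12}$ in the range of $\alpha$ that controls the tail; the remaining range is harmless since the series defining $S$ converges locally uniformly.

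Inserting the coefficient bound and summing then reduces $S(x)e^{\frac 12|x|^2}$ to the large–argument behaviour of an explicit series. When $s=\flat _\sigma$ one obtains $\sum _\alpha (C|x|/r_0)^{|\alpha |}(\alpha !)^{-\frac {\sigma +1}{2\sigma}}$, a Mittag–Leffler–type series of order $\frac {2\sigma}{\sigma +1}$, whence $S(x)\lesssim e^{-\frac 12|x|^2}\exp (c|x|^{\frac {2\sigma}{\sigma +1}})$. When $s$ is real with $0\le s<\frac 12$, the factor $e^{-r_0|\alpha |^{1/(2s)}}$ (with $1/(2s)>1$) confines the dominant term to $|\alpha |\sim (\log \eabs x)^{\frac {2s}{1-2s}}$ and a saddle–point estimate gives $S(x)\lesssim e^{-\frac 12|x|^2}\exp (c(\log \eabs x)^{\frac 1{1-2s}})$, while $s=\frac 12$ produces $\exp (c|x|^2)$. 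In every case the exponent of the correction coincides exactly with that of $\omega _{d,r,s}(x)^{-1}$ in \eqref{Eq:omegaDef}, and the constant $c$ decreases as $r_0$ increases; thus the Roumieu hypothesis (some $r_0$) yields \eqref{Eq:PilSpGivesFTEst} for some $r$, and the Beurling hypothesis (every $r_0$) yields it for every $r$ by letting $r_0\to \infty$.

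The step I expect to be the real work is the last one: carrying out the $(\alpha ,\rho )$ optimisation so that the leading exponent matches $\omega _{d,r,s}$ precisely, and checking that the subleading contributions -- the factor $e^{\frac 12 d\rho ^2}$, the number of $\alpha$ with $|\alpha |=n$, and the Stirling corrections -- are genuinely of lower order in each of the three regimes, uniformly. An alternative route I would keep in reserve is to combine Proposition \ref{Prop:PilSTFTChar} with the covariance of $V_\phi$ under fractional Fourier transforms from Appendix \ref{AppB}, where $\mascF _t$ acts as a rotation of phase space; the delicate point there is that recovering the pointwise values of $f$ from its short–time Fourier transform must use the analyticity of the Bargmann transform, since crude absolute–value estimates in the inversion degrade the decay rate from $\frac 12$ to $\frac 16$.
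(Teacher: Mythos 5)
Your proposal is correct in outline but takes a genuinely different route from the paper's. The paper starts from the short-time Fourier transform characterization (Proposition \ref{Prop:PilSTFTChar}), recovers $|f(\tfrac 12x)\phi (-\tfrac 12x)|$ by Fourier inversion in $\xi$, integrates the bound $|V_\phi f|\lesssim e^{-\frac 14(|x|^2+|\xi |^2)}\omega _{2d,r,s}$ to obtain exactly $|f(y)|\lesssim e^{-\frac 12|y|^2}\omega _{d,cr,s}(y)$, and then gets uniformity in $t$ from the equicontinuity of $\{ \mascF _t\} _{t\in \rr d}$ on $\maclH _s(\rr d)$ and $\maclH _{0,s}(\rr d)$ proved in \cite{Toft18}. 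You instead work from the Hermite-coefficient characterization (which is essentially the definition of $\maclH _s$ via $\ell _s$) and obtain uniformity in $t$ for free from the unimodular eigenvalues of $\mascF _t$ on $h_\alpha$; this is the cleanest part of your argument and replaces the equicontinuity input entirely. The price is that you must establish the pointwise Hermite bounds and carry out the three saddle-point computations yourself, whereas the paper inherits all the weight bookkeeping from Proposition \ref{Prop:PilSTFTChar}. Your generating-function/Cauchy estimate and the identification of the resulting series (Mittag--Leffler of order $\frac {2\sigma}{\sigma +1}$ for $s=\flat _\sigma$, the logarithmic saddle point for $s<\frac 12$, $e^{c|x|^2}$ with $c\to 0$ for the Beurling case $s=\frac 12$) do match $\omega _{d,r,s}^{-1}$ with the correct monotonicity in $r_0$, so the scheme closes.

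Two specific points. First, dismissing the range $|\alpha |\gtrsim |x|^2$ by ``locally uniform convergence'' is not sufficient: that tail must be shown to be $\lesssim e^{-\frac 12|x|^2}\omega _{d,r,s}(x)^{-1}$ \emph{globally}, not merely finite on compact sets. The fix stays inside your framework: take $\rho \asymp \sqrt{|\alpha |}$ in the same Cauchy estimate, which gives $|h_\alpha (x)|\lesssim e^{-\frac 12|x|^2}C^{|\alpha |}e^{c|x|\,|\alpha |^{1/2}}$ in that range, and then use that $\vartheta _{r_0,s}(\alpha )^{-1}$ decays superexponentially in $|\alpha |$ (for real $s<\frac 12$ and for $s=\flat _\sigma$), or exponentially with arbitrarily large rate in the Beurling case $s=\frac 12$, to absorb these factors; this should be written out rather than waved at. Second, the caveat in your closing sentence is unfounded: the paper's proof uses precisely the ``crude'' absolute-value inversion $\bigl |\int V_\phi f(x,\xi )e^{\frac i2\scal x\xi }\, d\xi \bigr |\le \int |V_\phi f(x,\xi )|\, d\xi$, but evaluates at the off-center point $y=\tfrac 12x$, where $-\tfrac 14|x|^2+\tfrac 12|y-x|^2=-\tfrac 12|y|^2$ is optimal, so no decay rate is lost and no analyticity of the Bargmann transform is needed at this stage.
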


\par

\begin{proof}
We only prove the result for
$s>0$.
The case $s=0$ follows
by similar arguments and is left for the reader.

\par

Let $\phi (x)=\pi ^{-\frac d4}e^{-\frac 12|x|^2}$. By Fourier inversion formula we get
\begin{equation*}
\left | \int _{\rr d} V_\phi f(x,\xi )e^{\frac i2\scal x\xi}\, d\xi   \right |
\asymp
|f({\textstyle{\frac 12}}x)\phi ({\textstyle{\frac 12}x}-x)|
\asymp
|f({\textstyle{\frac 12}}x)|e^{-\frac 18|x|^2}.
\end{equation*}
This gives
\begin{multline*}
|f({\textstyle{\frac 12}}x)|e^{-\frac 18|x|^2}
\asymp
\left | \int _{\rr d} V_\phi f(x,\xi )e^{\frac i2\scal x\xi}\, d\xi   \right |
\le
\int _{\rr d} |V_\phi f(x,\xi )|\, d\xi 
\\[1ex]
\lesssim
\int _{\rr d} e^{-\frac 14(|x|^2+|\xi |^2)}\omega _{2d,r,s}(x,\xi )\, d\xi
\\[1ex]
\lesssim
\int _{\rr d} e^{-\frac 14(|x|^2+|\xi |^2)}\omega _{d,c_1r,s}(x)\omega _{d,c_1r,s}(\xi )
\, d\xi
\asymp
e^{-\frac 14|x|^2}\omega _{d,c_1r,s}(x)
\end{multline*}
for some $r>0$ (every $r>0$).
Hence,
$$
|f({\textstyle{\frac 12}}x)| \lesssim e^{-\frac 14|x|^2}\omega _{d,c_1r,s}(x)
e^{\frac 18|x|^2}
=
e^{-\frac 18|x|^2}\omega _{d,c_1r,s}(x)
$$
for some $r>0$ (every $r>0$). Here the positive constant $c_1$ can
be chosen independently on $r$.
This gives
$$
|f(x)| \lesssim e^{-\frac 12|x|^2}\omega _{d,c_2r,s}(x)
$$
for some $r>0$ (every $r>0$), and the assertion holds for $t=0$.
Since $\{ \mascF _t \} _{t\in \rr d}$
is an equicontinuous set of homeomorphisms on $\maclH_s(\rr d)$
and $\maclH_{0,s}(\rr d)$ in view of \cite[Proposition 7.1]{Toft18} and
its proof, it follows by replacing $f$ by $\mascF _tf$ in the previous
estimates that \eqref{Eq:PilSpGivesFTEst} holds for some $r>0$
(every $r>0$).
\end{proof}

\par

Proposition \ref{Prop:PilSpGivesFTEst} shows one of the
directions in Theorems \ref{Thm:Mainthm1} and \ref{Thm:Mainthm1A}.
The converse needs more steps and we begin with the following. Here
we involve the Bargmann transform because later investigations 
are based on analyticity arguments. In what follows we let
\begin{equation}\label{Eq:AtMatrixDef}
\begin{aligned}
A_t(x,\xi ) &= (\cos (t\textstyle{\frac \pi 2})x +
\sin (t\textstyle{\frac \pi 2})\xi ,
-\sin (t\textstyle{\frac \pi 2})x +\cos (t\textstyle{\frac \pi 2})\xi ),
\quad
x,t, \xi \in \mathbf R 
\\[1ex]
A_{d,t}(x,\xi ) &= (A_{t_1}(x_1,\xi _1),\dots ,A_{t_d}(x_d,\xi _d)),
\quad
x,t,\xi \in \rr d.
\end{aligned}
\end{equation}

\par

\begin{prop}\label{Prop:FTEstGivesWeakPilSpProp}
Let $s\in \overline{\mathbf R_\flat}$ be such that $s< \frac 12$
($0<s\le \frac 12$),
$t\in \rr d$, $\phi (x)=\pi ^{-\frac d4}e^{-\frac 12|x|^2}$
and let $\omega _{d,r,s}$ and $A_t(x,\xi )$ be given by
\eqref{Eq:omegaDef} and \eqref{Eq:AtMatrixDef}.
Then there is a constant $c>0$, and for every $r>0$, a constant $C=C_r>0$
such that
\begin{alignat}{2}
\sup _{x,\xi \in \rr d}
|V_\phi f(A_{d,t}(x,\xi ))e^{\frac 14|x|^2}\omega _{d,r,s}(x)|
&\le
C_r\nm {\mascF _tf \cdot e^{\frac 12|\cdo |^2}\omega _{d,c r,s}}{L^\infty}
\label{Eq:FTEstGivesWeakPilSpProp1}
\end{alignat}
and
\begin{multline}
\label{Eq:FTEstGivesWeakPilSpProp2}
\sup _{z \in \cc d}
|\mathfrak V_df(e^{-it_1\frac \pi 2}z_1,\dots
,e^{-it_d\frac \pi 2}z_d)\cdot \omega _{d,r,s}
(\operatorname{Re}(z))e^{-\frac 12|\operatorname{Im}(z)|^2}|
\\[1ex]
\le
C_r\nm {\mascF _tf \cdot e^{\frac 12|\cdo |^2}\omega _{d,cr,s}}{L^\infty}
\end{multline}
for every $f\in \maclH _0'(\rr d)$.
\end{prop}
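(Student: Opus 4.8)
The plan is to strip away the rotation $A_{d,t}$ first, reduce to a single Gaussian convolution estimate at $t=0$, and then transport the result to the Bargmann side using the covariance rules collected in Appendix \ref{AppB}. Since the fractional Fourier transform $\mascF _t$ is the metaplectic operator attached to the phase-space rotation $A_{d,t}$ and maps the Gaussian window $\phi$ to a unimodular multiple of itself, the short-time Fourier transform satisfies the covariance identity
$$
|V_\phi f(A_{d,t}(x,\xi ))| = |V_\phi (\mascF _tf)(x,\xi )|,\qquad x,\xi \in \rr d .
$$
Hence \eqref{Eq:FTEstGivesWeakPilSpProp1} for $f$ at a given $t$ is the same as that estimate at $t=0$ applied to $g=\mascF _tf$, and it suffices to prove
$$
\sup _{x,\xi \in \rr d}|V_\phi g(x,\xi )e^{\frac 14|x|^2}\omega _{d,r,s}(x)|
\le C_r\nm {g\cdot e^{\frac 12|\cdo |^2}\omega _{d,cr,s}}{L^\infty }
$$
for every $g\in \maclH _0'(\rr d)$.

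For this core estimate I would argue by direct convolution. Setting $M=\nm {g\cdot e^{\frac 12|\cdo |^2}\omega _{d,cr,s}}{L^\infty }$ gives the pointwise bound $|g(y)|\le M e^{-\frac 12|y|^2}\omega _{d,cr,s}(y)^{-1}$, while the definition of $V_\phi$ with the Gaussian window yields $|V_\phi g(x,\xi )|\lesssim \int _{\rr d}|g(y)|e^{-\frac 12|y-x|^2}\, dy$. Completing the square through the identity $-\tfrac 12|y|^2-\tfrac 12|y-x|^2=-|y-\tfrac x2|^2-\tfrac 14|x|^2$ shows that the factor $e^{-\frac 14|x|^2}$ cancels the $e^{\frac 14|x|^2}$ in the weight, leaving one to control $\omega _{d,r,s}(x)\int _{\rr d}e^{-|y-x/2|^2}\omega _{d,cr,s}(y)^{-1}\, dy$ uniformly in $x$. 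Because the Gaussian concentrates $y$ near $x/2$, the slow-variation property of $\omega _{d,r,s}$ recorded just before Proposition \ref{Prop:PilSTFTChar} lets me replace $\omega _{d,cr,s}(y)^{-1}$ by $\omega _{d,cr,s}(x/2)^{-1}$ up to a constant, and the absolute constant $c$ is chosen precisely so that $\omega _{d,r,s}(x)\,\omega _{d,cr,s}(x/2)^{-1}$ stays bounded. This gives \eqref{Eq:FTEstGivesWeakPilSpProp1}.

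Finally I would deduce the Bargmann estimate \eqref{Eq:FTEstGivesWeakPilSpProp2} from \eqref{Eq:FTEstGivesWeakPilSpProp1}. The identity linking the two transforms (Appendix \ref{AppB}) gives, with $z=2^{-1/2}(x-i\xi )$, a relation of the form $|\mathfrak V_dg(z)|=c_d|V_\phi g(x,\xi )|e^{\frac 12|z|^2}$, under which $|\mathfrak V_dg(z)|\,\omega _{d,r,s}(\RE z)e^{-\frac 12|\IM z|^2}$ is comparable to $|V_\phi g(x,\xi )|e^{\frac 14|x|^2}\omega _{d,r,s}(x/\sqrt 2)$, the harmless $\sqrt 2$-dilation of the weight argument being absorbed into $c$. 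Combining this with the rotation rule $\mathfrak V_df(e^{-it_1\frac \pi 2}z_1,\dots ,e^{-it_d\frac \pi 2}z_d)=\mathfrak V_d(\mascF _tf)(z)$ and applying \eqref{Eq:FTEstGivesWeakPilSpProp1} to $g=\mascF _tf$ produces \eqref{Eq:FTEstGivesWeakPilSpProp2}.

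The main obstacle I anticipate is the weight bookkeeping rather than any analytic subtlety: one must match the quadratic Gaussian exponents against the subexponential or logarithmic weights $\omega _{d,r,s}$ and fix the absolute constant $c$ so that the estimate holds simultaneously for the Roumieu range $s<\tfrac 12$ and the borderline $s=\tfrac 12$. The borderline $s=\tfrac 12$, where $\omega _{d,r,s}=e^{-r|\cdo |^2}$ is itself Gaussian, is the tightest case: there the residual Gaussian integral converges only for $cr$ small and the resulting exponent $(-r+\tfrac{cr}{4(1-cr)})|x|^2$ must stay nonpositive, which is exactly what dictates the admissible choice of $c$.
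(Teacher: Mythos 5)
Your argument is correct and follows essentially the same route as the paper: reduce to $t=0$ via the covariance of the short-time Fourier/Bargmann transform under fractional Fourier transforms from Appendix \ref{AppB}, then run the Gaussian convolution estimate with the square completed to $-\frac 14|x-2y|^2$ and the weight handled so that a constant $c$ independent of $r$ emerges, and finally transfer to the Bargmann side. The only cosmetic difference is that the paper performs the $t$-reduction on the Bargmann side and splits the weight submultiplicatively as $\omega _{d,r,s}(x)\lesssim \omega _{d,c_1r,s}(y)/\omega _{d,c_2r,s}(x-2y)$ inside the integral, rather than invoking slow variation to recentre it at $x/2$.
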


\par

\begin{proof}
The estimates \eqref{Eq:FTEstGivesWeakPilSpProp1}
and \eqref{Eq:FTEstGivesWeakPilSpProp2} are equivalent
in view of Appendix \ref{AppB} (see also \cite{Toft18}). 
Since
$$
(\mathfrak V_d(\mascF _tf))(z)=
(\mathfrak V_df)(e^{-it_1\frac \pi 2}z_1,\dots
,e^{-it_d\frac \pi 2}z_d),
\quad z=(z_1,\dots ,z_d),\ t=(t_1,\dots ,t_d),
$$
the proposition follows if we prove 
\eqref{Eq:FTEstGivesWeakPilSpProp1} in the case $t=0$.

\par

By Hausdorff-Young's inequality we get for some
constants $c_1,c_2>0$,
\begin{align*}
\sup _{x,\xi \in \rr d}
&| V_\phi f(x,\xi ) e^{\frac 14|x|^2}\omega _{d,r,s}(x)|
\\[1ex]
&=
\sup _{x,\xi \in \rr d}
|\mascF (f\cdot \overline {\phi (\cdo -x)})(\xi )
e^{\frac 14|x|^2}\omega _{d,r,s}(x)|
\\[1ex]
&\le
\sup _{x \in \rr d}
\left (
\int |f(y)\phi (y-x) e^{\frac 14|x|^2}\omega _{d,r,s}(x)|\, dy
\right )
\\[1ex]
&=
\sup _{x \in \rr d}
\left (
\int |(f(y)e^{\frac 12|y|^2})e^{-\frac 14|x-2y|^2}\omega _{d,r,s}(x) |\, dy
\right )
\\[1ex]
&\lesssim
\sup _{x \in \rr d}
\left (
\int |(f(y)e^{\frac 12|y|^2}\omega _{d,c_1r,s}(y))(e^{-\frac 14|x-2y|^2}/
\omega _{d,c_2r,s}(x-2y))|\, dy
\right )
\\[1ex]
&\le
\sup _{x \in \rr d}
\left (
\int e^{-\frac 14|x-2y|^2}/
\omega _{d,c_2r,s}(x-2y)\, dy
\right )
\sup _{y\in \rr d} |f(y)e^{\frac 12|y|^2}\omega _{d,c_1r,s}(y)|
\\[1ex]
&\asymp
\sup _{y\in \rr d} |f(y)e^{\frac 12|y|^2}\omega _{d,c_1r,s}(y)|.
\end{align*}
That is,
$$
\sup _{x,\xi \in \rr d}
|V_\phi f(x,\xi ) e^{\frac 14|x|^2}\omega _{d,r,s}(x)|
\lesssim
\sup _{x\in \mathbf R}|f(x)e^{\frac 12|x|^2}\omega _{d,cr,s}(x)|
$$
for some $c>0$ which is independent of $r>0$. This gives
\eqref{Eq:FTEstGivesWeakPilSpProp1}.
\end{proof}

\par

We also have the following.

\par

\begin{prop}\label{Prop:EssEst}
Let $s\in \overline{\mathbf R_\flat}$,
$\omega _{d,r,s}$ be given by \eqref{Eq:omegaDef},
and let
$$
\Omega =\Gamma _1\times \cdots \times \Gamma _d
\quad \text{and}\quad
\Omega _0
=\partial \Gamma _1\times \cdots
\times \partial \Gamma _d,
$$
where
$$
\Gamma _j = \sets {z\in \mathbf C}
{\alpha _j\le \Arg (z)\le \beta _j},
\quad j=1,\dots ,d,
$$
and $\alpha _j,\beta _j\in \mathbf R$ satisfy
$0<\beta _j-\alpha _j<\frac \pi 2$. If
$F\in A(\Omega )$ satisfies
\begin{equation}\label{Eq:EssEst}
|F(z)|\lesssim e^{r_0|z|^2},\ z\in \Omega,
\quad \text{and}\quad |F(z)|\lesssim
\omega _{d,r,s}(z)^{-1},\ z\in \Omega _0,
\end{equation}
for some constants $r_0,r>0$, then for
some constant $c$ which is independent
of $r$ and $r_0$, it holds
$$
|F(z)|\lesssim \omega _{d,c\cdot r,s}(z)^{-1},\
z\in \Omega .
$$
\end{prop}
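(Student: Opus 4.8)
The plan is to reduce the $d$-dimensional statement to a one-dimensional weighted Phragm\'en--Lindel\"of estimate and then iterate it variable by variable, the product structure of $\Omega$ and the (essential) product structure of $\omega _{d,r,s}$ being what makes the induction on $d$ run. First I would record that the opening condition $0<\beta _j-\alpha _j<\frac \pi 2$ is precisely what renders order-two growth subcritical on each factor: after the rotation $w=e^{-i\gamma _j}z$ with $\gamma _j=\frac 12(\alpha _j+\beta _j)$, each $\Gamma _j$ becomes a centred sector $\{|\Arg w|\le \delta _j\}$ with $\delta _j<\frac \pi 4$, so that $\RE (w^2)=|w|^2\cos (2\Arg w)\ge |w|^2\cos (2\delta _j)>0$ throughout $\Gamma _j$, with the factor $\cos (2\delta _j)$ attained on $\partial \Gamma _j$.

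Next I would isolate the one-dimensional lemma carrying all the content: if $g\in A(\Gamma )$ on a single sector of half-opening $\delta <\frac \pi 4$ satisfies $|g(z)|\lesssim e^{r_0|z|^2}$ in $\Gamma$ and $|g(z)|\le B\,\omega _{1,r,s}(z)^{-1}$ on $\partial \Gamma$, then $|g(z)|\le B\,\omega _{1,cr,s}(z)^{-1}$ in $\Gamma$, with $c$ depending only on $\delta$ and on $s$. To prove it I would introduce a holomorphic multiplier $M$ realising the exponent of the weight through its modulus: $M(z)=e^{-az^2}$ when $s=\frac 12$, $M(z)=e^{-az^p}$ with $p=\frac {2\sigma}{\sigma +1}$ when $s=\flat _\sigma$, and a logarithmic multiplier $e^{-a(\log z)^q}$ with $q=\frac 1{1-2s}$ when $0\le s<\frac 12$, each taken with the principal branch on $\Gamma$. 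Choosing $a$ so that $|M|=e^{-a\RE (\cdot )}$ exactly cancels the boundary weight (e.g.\ $a=r/\cos (2\delta )$ in the Gaussian case), the product $gM$ is bounded by $B$ on $\partial \Gamma$ and still has finite order in $\Gamma$; since $2<\pi /(2\delta )$, the classical Phragm\'en--Lindel\"of theorem for $\Gamma$ forces $|gM|\le B$ throughout, and undoing the multiplier via $\RE (\cdot )\le |\cdot |$ gives the claimed bound with $c=1/\cos (2\delta )$ (respectively $1/\cos (p\delta )$ in the other cases), independent of $r$ and $r_0$.

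Then I would pass to $d$ variables. Using the elementary comparisons $|z|^p\asymp \sum _j|z_j|^p$ and $(\log \eabs z)^q\asymp \sum _j(\log \eabs {z_j})^q$, valid up to dimensional constants and hence up to enlarging $r$, the weight factorises as $\omega _{d,r,s}(z)\asymp \prod _j\omega _{1,r',s}(z_j)$. I would thaw the variables one at a time: freezing $z_2,\dots ,z_d$ on $\partial \Gamma _2,\dots ,\partial \Gamma _d$ turns $z_1\mapsto F(z)$ into a single-variable function to which the lemma applies, the frozen weight factors and the frozen Gaussian $e^{r_0\sum _{j\ge 2}|z_j|^2}$ being absorbed into the constant $B$ and the interior bound; this produces the factor $\omega _{1,cr,s}(z_1)^{-1}$ for $z_1\in \Gamma _1$. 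Repeating in $z_2$ with $z_1$ now interior and $z_3,\dots ,z_d$ on the boundary, and so on through all $d$ variables, yields $|F(z)|\lesssim \prod _j\omega _{1,cr,s}(z_j)^{-1}\asymp \omega _{d,cr,s}(z)^{-1}$ on all of $\Omega$. The decisive point making the iteration close is that in each one-dimensional step the output constant is governed by the boundary bound alone and not by the interior order-two bound, so the already thawed variables sit harmlessly inside the weight.

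The main obstacle I expect is the one-dimensional lemma in the subexponential range $0\le s<\frac 12$, where the reciprocal weight $e^{r(\log \eabs z)^q}$ has no entire realisation: one must build the multiplier $e^{-a(\log z)^q}$ with the principal branch, handle it near the origin (where the weight is trivial and $F$ is bounded by continuity), and verify $\RE ((\log z)^q)\asymp (\log |z|)^q$ uniformly on $\Gamma$ for large $|z|$. Keeping the constant $c$ independent of $r$ and $r_0$ throughout, so that the iteration over the $d$ factors does not degrade it, is the bookkeeping that needs the most care.
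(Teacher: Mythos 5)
Your proof is correct and follows essentially the same route as the paper: both multiply $F$ by a holomorphic function whose modulus realises the boundary weight (using that $\beta _j-\alpha _j<\frac \pi 2$ keeps the order-two, resp.\ order-$\frac {2\sigma}{\sigma +1}$ or logarithmic, exponent subcritical on each sector) and then invoke Phragm{\'e}n--Lindel{\"o}f. The only organisational difference is that the paper multiplies by the full product multiplier $G_c(z)=\prod _j e^{-crz_j^{2\sigma /(\sigma +1)}}$ (resp.\ the logarithmic analogue) in all variables at once and then applies the unweighted multi-dimensional Phragm{\'e}n--Lindel{\"o}f statement of Lemma \ref{Lemma:EssEst}, whereas you prove a weighted one-dimensional lemma and iterate it variable by variable --- the induction you carry out by hand is exactly the one the paper has already packaged into Lemma \ref{Lemma:EssEst} via Proposition \ref{Prop:PhragmLindelMultDim}.
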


\par

For the proof we need the following lemma, which is a special case
of Proposition \ref{Prop:PhragmLindelMultDim} in Appendix
\ref{AppA}. The proof is therefore omitted.

\par

\begin{lemma}\label{Lemma:EssEst}
Let $\Omega$ and $\Omega _0$ be the same
as in Proposition \ref{Prop:EssEst} and
suppose that
$$
|F(z)|\lesssim e^{r_0|z|^2},\ z\in \Omega,
\quad \text{and}\quad |F(z)|\le M,\
z\in \Omega _0,
$$
for some constants $r_0,M>0$. Then
$$
|F(z)|\le M,\ z\in \Omega .
$$
\end{lemma}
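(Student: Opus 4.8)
The plan is to prove the lemma by induction on the dimension $d$, after first normalizing the sectors. For each $j$ put $\phi _j=(\alpha _j+\beta _j)/2$ and $\delta _j=(\beta _j-\alpha _j)/2$, so that $0<\delta _j<\frac \pi 4$ by hypothesis. Replacing $F(z)$ by $F(e^{i\phi _1}z_1,\dots ,e^{i\phi _d}z_d)$ rotates each sector $\Gamma _j$ into the symmetric sector $\{\, z\in \mathbf C\, ;\, |\Arg (z)|\le \delta _j\, \}$, and this changes neither the growth bound $e^{r_0|z|^2}$ (since $|e^{i\phi _j}z_j|=|z_j|$) nor the bound $M$ on the distinguished boundary $\Omega _0$. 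Hence I may assume throughout that every $\Gamma _j$ is symmetric about the positive real axis with half-opening $\delta _j<\frac \pi 4$.

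\emph{Base case $d=1$.} This is the classical Phragm{\'e}n-Lindel{\"o}f principle for a sector, which I would prove by the standard multiplier argument. Choose an exponent $\gamma$ with $2<\gamma <\pi /(2\delta )$; such $\gamma$ exists \emph{precisely} because $\delta <\frac \pi 4$. On the sector the principal branch of $z^\gamma$ satisfies $\RE (z^\gamma )=|z|^\gamma \cos (\gamma \Arg z)\ge c|z|^\gamma \ge 0$ with $c=\cos (\gamma \delta )>0$. For $\varepsilon >0$ set $G_\varepsilon (z)=F(z)e^{-\varepsilon z^\gamma}$. Then $|e^{-\varepsilon z^\gamma}|\le 1$ on the sector, so $|G_\varepsilon |\le M$ on the two boundary rays, while the growth bound gives $|G_\varepsilon (z)|\lesssim e^{r_0|z|^2-\varepsilon c|z|^\gamma}$, which tends to $0$ as $|z|\to \infty$ because $\gamma >2$. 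Applying the ordinary maximum-modulus principle to the sector truncated by a large circular arc, on which $G_\varepsilon$ is uniformly small, yields $|G_\varepsilon |\le M$ throughout the sector, and letting $\varepsilon \to 0^+$ gives $|F|\le M$.

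\emph{Inductive step.} Assume the statement in dimension $d-1$. Fix a boundary point $(w_2,\dots ,w_d)\in \partial \Gamma _2\times \cdots \times \partial \Gamma _d$. The slice $z_1\mapsto F(z_1,w_2,\dots ,w_d)$ is holomorphic on $\Gamma _1$, satisfies $|F(z_1,w_2,\dots ,w_d)|\lesssim e^{r_0|z_1|^2}$, and is bounded by $M$ on $\partial \Gamma _1$, since there $(z_1,w_2,\dots ,w_d)\in \Omega _0$. The one-dimensional case just established therefore gives $|F(z_1,w_2,\dots ,w_d)|\le M$ for all $z_1\in \Gamma _1$. Consequently, for each fixed $z_1\in \Gamma _1$ the function $(z_2,\dots ,z_d)\mapsto F(z_1,z_2,\dots ,z_d)$ is holomorphic on $\Gamma _2\times \cdots \times \Gamma _d$, obeys the growth bound $e^{r_0(|z_2|^2+\cdots +|z_d|^2)}$ up to a constant depending on $z_1$, and is bounded by $M$ on the $(d-1)$-dimensional distinguished boundary $\partial \Gamma _2\times \cdots \times \partial \Gamma _d$. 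By the inductive hypothesis it is bounded by $M$ on all of $\Gamma _2\times \cdots \times \Gamma _d$, and since $z_1$ was arbitrary this is exactly $|F(z)|\le M$ on $\Omega$.

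I expect the crux to be the base case, specifically the construction of the multiplier, where one must balance two competing demands on $\gamma$: one needs $\gamma >2$ so that the factor $e^{-\varepsilon \RE (z^\gamma )}$ overwhelms the admissible growth $e^{r_0|z|^2}$, and $\gamma <\pi /(2\delta )$ so that $z^\gamma$ has nonnegative real part throughout the sector and the multiplier does not spoil the boundary bound. The hypothesis $\beta _j-\alpha _j<\frac \pi 2$ is exactly what makes the interval $(2,\pi /(2\delta ))$ nonempty, and this is the one point where the angular restriction is genuinely used. The only remaining care is bookkeeping in the inductive step, namely verifying that fixing variables preserves holomorphy and the slice-wise growth estimate so that the lower-dimensional hypothesis applies.
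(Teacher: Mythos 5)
Your proof is correct and follows essentially the same route as the paper, which derives the lemma from its multi-dimensional Phragm{\'e}n--Lindel{\"o}f result (Proposition \ref{Prop:PhragmLindelMultDim}): a rotation to symmetric sectors, the classical one-variable multiplier argument with $e^{-\varepsilon z^{\gamma}}$, and a slice-wise induction that upgrades the bound from the distinguished boundary to the full polysector. The only cosmetic difference is that you induct on the dimension $d$ while the paper inducts on the number $k$ of factors already replaced by full sectors; unrolling your induction gives exactly the paper's argument.
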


\par

\begin{proof}[Proof of Proposition \ref{Prop:EssEst}]
By choosing
$\theta _1,\dots ,\theta _d\in \mathbf R$
in suitable ways and considering
$$
G(z_1,\dots ,z_d) = F(e^{i\theta _1}z_1,
\dots ,e^{i\theta _d}z_d),
$$
we reduce ourself to the case when
$\alpha _j=-\beta _j$ and $0<\beta _j<\frac \pi 4$.

\par

Let
$$
H(z)=F(z)G_c(z),
$$
where
$$
G_c(z) = \prod _{j=1}^d
e^{-crz_j^{\frac {2\sigma}{\sigma +1}}},
\ s=\flat _\sigma
\quad \text{and}\quad
G_c(z) = \prod _{j=1}^d
e^{-cr(\log (1+z_j))^{\frac {1}{1-2s}}},
\ s\in \mathbf R,\ s<\frac 12.
$$
By choosing $c>0$ large enough and independently
of $r$ we have that $H$ is bounded on $\Omega _0$
and
$$
|H(z)|\lesssim e^{r|z|^2},\quad z\in \Omega .
$$
By Lemma \ref{Lemma:EssEst} it follows that
$H$ is bounded on $\Omega$. This implies
$$
|F(z)|\lesssim |G_c(z)|^{-1},
$$
which gives the desired estimate and thereby
the result.
\end{proof}

\par

\begin{proof}[Proof of Theorem \ref{Thm:Mainthm1}]
Evidently, (3) implies (2), and by Proposition \ref{Prop:PilSpGivesFTEst}
it follows that (1) implies (3). We need to prove that (2) implies (1).

\par

By considering $\mascF _{t_0}f$ and using the fact that $\maclH _s(\rr d)$
and $\maclH _{0,s}(\rr d)$, we reduce ourself to the case that $t_0=0$.

\par

Let $t=(k_1u_1,\dots ,k_du_d)\in \Lambda _{0,u}$ be fixed,
$$
\alpha _j = \textstyle{\frac {\pi k_ju_j}2},
\quad 
\beta _j = \min (\textstyle{\frac {\pi (k_j+1)u_j}2},\pi),
$$
and let $\Gamma _j$, $\Omega$ and $\Omega _0$ be as in
Proposition \ref{Prop:EssEst}, $j=1,\dots ,d$. Also let
$F=(\mathfrak V_df)$. We claim that
\eqref{Eq:EssEst} holds.

\par

In fact, if $\big ( (k_1+1)u_1,\dots , (k_d+1)u_d \big ) \in \Lambda _{0,u}$, then
\eqref{Eq:EssEst} follows from \eqref{Eq:PilFTChar1}
and letting $z_j=x_j$ be real in \eqref{Eq:FTEstGivesWeakPilSpProp2},
$j=1,\dots ,d$.

\par

We only prove the assertion in the Roumieu case.
That is, 
\eqref{Eq:PilFTChar1} holds for some $r>0$, then we shall
prove that $f\in \maclH _s(\rr d)$, or equivalently, that
\begin{equation}\label{Eq:PilSpaceBargEquiv}
\nm {\mathfrak V_df\cdot \omega _{d,r,s}}{L^\infty} <\infty 
\end{equation}
holds for some $r>0$. The other cases follow by similar arguments
and are left for the reader.

\par

Let $k_0\in \{ 1,\dots ,N \}$ be fixed and consider the sector
$\Omega _0$, given by
\begin{align}
\Omega _0 &=\Sets {(z_1,\dots ,z_d)\in \cc d}
{(k_0-1)t_0\frac \pi 2\le \Arg (z_1) \le k_0t_0\frac \pi 2}
\label{Eq:Omega0Def1}
\intertext{or}
\Omega _0 &= \Sets {(z_1,\dots ,z_d)\in \cc d}
{Nt_0\frac \pi 2\le \Arg (z_1) \le \pi}.
\label{Eq:Omega0Def2}
\end{align}
By \eqref{Eq:FTEstGivesWeakPilSpProp2} and the fact that
$(\mascF _2f)(x) = f(-x)$, it follows that
\begin{equation}\label{Eq:FTEstGivesWeakPilSpProp2Cons1}
\sup _{z \in \partial \Omega _0}
|\mathfrak V_df(z)\cdot \omega _{d,r,s}(z)| \le C_r,
\qquad j=1,2.
\end{equation}

\par

Suppose that $k=k_0$ and $z$ belongs to $\Omega _0$ in
\eqref{Eq:Omega0Def1}, or that $k=N+1$ and
$z$ belongs to $\Omega _0$ in \eqref{Eq:Omega0Def2}. Then let
$$
h_{k,c,w_1}(z_1) =
\begin{cases}
\mathfrak V_df(z)\cdot e^{-cr(\log (1+e^{-i(k-\frac 12)t_0\frac \pi 2}z_1))^{\frac 1{1-2s}}},
& s\in \mathbf R,\ 0\le s <\frac 12,
\\[1ex]
\mathfrak V_df(z)\cdot
e^{-cr(e^{-i(k-\frac 12)t_0\frac \pi 2}z_1)^{\frac {2\sigma}{\sigma +1}}},
& s=\flat _\sigma ,
\end{cases}
$$
where $w_1=(z_2,\dots ,z_d)$, and $c>0$ is independent of $r$.
Here we let the branch cut of $\log (\cdo )$
be the negative real axis.

\par

If we choose $c$ large enough and use
\eqref{Eq:FTEstGivesWeakPilSpProp2Cons1},
then it follows that for some constant $c_1$
which only depends on $d$ and $s$, we have
\begin{alignat*}{2}
|h_{k,c,w_1}(z_1)| &\le C_r \omega _{d-1,c_1r,s}(w_1)^{-1}e^{\frac 12|w|^2}, &
\qquad z_1&\in \partial \Omega
\intertext{and}
|h_{k,c,w_1}(z_1)| &\le C_r \omega _{d-1,c_1r,s}(w_1)^{-1}e^{\frac 12|z_1|^2}
e^{\frac 12|w_1|^2}, &
\qquad z_1&\in \Omega .
\end{alignat*}
By Phragm{\'e}n-Lindel{\"o}f's theorem (see Proposition \ref{Prop:PhragmLindel}
in Appendix \ref{AppA}) we have
$$
|h_{k,c,w_1}(z_1)| \le C_r \omega _{d-1,c_1r,s}(w_1)^{-1}e^{\frac 12|w_1|^2},
\qquad z_1\in \Omega ,
$$
which implies that
\begin{equation}\label{Eq:PartEstFirstStep}
|\mathfrak V_df(z) \omega _{1,cr,s}(z_1)\omega _{d-1,cr,s}(w_1)
e^{-\frac 12|w_1|^2}| \le C_r,
\end{equation}
when $z_1\in \Omega _0$ and $w_1\in \cc {d-1}$, for some constant
$c>0$ which is independent of $r$.
Since the union of possible
$\Omega _0$ equals
$$
\Omega = \sets {z_1\in \mathbf C}{\operatorname{Im}(z_1)\ge 0},
$$
the estimate \eqref{Eq:PartEstFirstStep} holds for all such $z_1$. By using
$\mascF _{2+t}f(x)=\mascF _{t}f(-x)$, we obtain similar estimates with
$2+t$ in place of $t$ above, which in turn give \eqref{Eq:PartEstFirstStep}
for $z_1\in \mathbf C\setminus \Omega$. Consequently,
\eqref{Eq:PartEstFirstStep} holds for all $z\in \cc d$.

\medspace

By a combination of \eqref{Eq:FTEstGivesWeakPilSpProp2}
and \eqref{Eq:PartEstFirstStep},
similar arguments with $z_2$ in place of $z_1$ give
\begin{equation}\label{Eq:PartEstSecondStep}
|\mathfrak V_df(z) \omega _{1,cr,s}(z_1)\omega _{1,cr,s}(z_2)
\omega _{d-2,cr,s}(w_2)
e^{-\frac 12|w_2|^2|} \le C_r,
\end{equation}
when $z_1\in \Omega _0$ and $w_2=(z_3,\dots ,z_d)\in \cc {d-2}$,
for some constant $c>0$ which is independent of $r$. By continuing
in this inductively way we obtain after $d$ steps that
$$
|\mathfrak V_df(z) \omega _{1,cr,s}(z_1)\cdots \omega _{1,cr,s}(z_d)|
\le C_r, \qquad z\in \cc d,
$$
for some constant $c>0$ which is independent of $r$. This gives
$$
|\mathfrak V_df(z) \omega _{d,cr,s}(z)|
\le C_r, \qquad  z\in \cc d,
$$
for some constant $c>0$ which is independent of $r$, and the result follows.
\end{proof}

\par

\appendix

\par

\section{Estimates of Phragm{\'e}n-Lindel{\"o}f types}
\label{AppA}

\par

First we recall the following result of Phragm{\'e}n-Lindel{\"o}f. For completeness
we give a proof.

\par

\begin{prop}\label{Prop:PhragmLindel}
Let $\alpha ,\beta ,\rho \in \mathbf R$ be such that
$$
\beta > \alpha ,\quad \rho >0\quad \text{and} \quad
\rho (\beta -\alpha ) <\pi .
$$
Also let $F$ be analytic in
$$
\Omega = \sets {z\in \mathbf C}{\alpha <\Arg (z)<\beta}
$$
and continuous on $\overline{\Omega}$ such that
\begin{alignat*}{2}
|F(z)| &\le M, & \quad \Arg (z) &\in \{ \alpha ,\beta\}
\intertext{and}
|F(z)| &\le Ce^{r|z|^\rho}, & \quad z &\in \Omega ,
\end{alignat*}
for some constants $C,M,r\ge 0$. Then
\begin{equation}\label{Eq:PhragmLindel}
|F(z)| \le M, \quad z\in \overline \Omega .
\end{equation}
\end{prop}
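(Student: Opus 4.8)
The plan is to use the classical comparison-function (damping) technique of Phragm{\'e}n and Lindel{\"o}f. First I would normalize the sector by a rotation: replacing $F(z)$ by $F(e^{i(\alpha +\beta )/2}z)$ turns $\Omega$ into the symmetric sector $\sets {z\in \mathbf C}{|\Arg (z)|<\gamma}$ with $\gamma =(\beta -\alpha )/2$, and since $|e^{i(\alpha +\beta )/2}z|=|z|$ all hypotheses are preserved with the same $M$, $C$, $r$ and $\rho$. In these coordinates the assumption $\rho (\beta -\alpha )<\pi$ becomes $\rho \gamma <\pi /2$, which leaves room to fix an exponent $\rho _1$ with $\rho <\rho _1$ and still $\rho _1\gamma <\pi /2$.

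Next I would introduce the damping factor. For $z=Re^{i\theta}$ with $|\theta |<\gamma$ and $\ep >0$, set $G_\ep (z)=F(z)e^{-\ep z^{\rho _1}}$, using the principal branch of $z^{\rho _1}$ on $\Omega$. Then $\RE (z^{\rho _1})=R^{\rho _1}\cos (\rho _1\theta )$, and because $|\rho _1\theta |\le \rho _1\gamma <\pi /2$ we get $\cos (\rho _1\theta )\ge \delta :=\cos (\rho _1\gamma )>0$, so $|G_\ep (z)|\le |F(z)|e^{-\ep \delta R^{\rho _1}}$. On the two bounding rays this yields $|G_\ep |\le M$, while in the interior the growth hypothesis gives $|G_\ep (z)|\le Ce^{rR^\rho -\ep \delta R^{\rho _1}}$, which tends to $0$ as $R\to \infty$ since $\rho _1>\rho$. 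Hence $G_\ep$ is bounded on $\overline \Omega$ and vanishes at infinity.

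Then I would run the maximum modulus principle on the truncated sectors $\Omega \cap \sets {z}{|z|<R_0}$. On the circular arc $|z|=R_0$ the quantity $\sup |G_\ep |$ tends to $0$, so it is $\le M$ once $R_0$ is large; on the straight edges $|G_\ep |\le M$ directly. The maximum principle therefore gives $|G_\ep (z_0)|\le M$ for every interior $z_0$ as soon as $R_0>|z_0|$ is large enough. Finally, fixing $z_0$ and letting $\ep \to 0^+$, the factor $e^{-\ep \RE (z_0^{\rho _1})}\to 1$, so $|F(z_0)|\le M$, which (after undoing the rotation) is exactly \eqref{Eq:PhragmLindel}.

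The main obstacle, and really the only delicate point, is the coupling of the two exponents: one must choose $\rho _1$ strictly between $\rho$ and $\pi /(2\gamma )$, which is precisely where the hypothesis $\rho (\beta -\alpha )<\pi$ enters. This single choice must simultaneously make the damping term $R^{\rho _1}$ dominate the growth term $R^\rho$ and keep the phase $\rho _1\theta$ inside $(-\pi /2,\pi /2)$ so that $\cos (\rho _1\theta )$ stays bounded away from $0$. Once $\rho _1$ is fixed, everything else is routine.
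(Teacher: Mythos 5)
Your proof is correct and follows essentially the same route as the paper's own argument: rotate to a symmetric sector, multiply by the damping factor $e^{-\ep z^{\rho _1}}$ with $\rho <\rho _1$ and $\rho _1\gamma <\pi /2$, apply the maximum modulus principle on truncated sectors, and let $\ep \to 0$. The choice of the intermediate exponent $\rho _1$, which you correctly identify as the one delicate point, is exactly the step the paper makes with its $\rho _0$.
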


\par

\begin{proof}
By considering $F_0(z)=F(e^{it}z)$ for suitable $t\in \mathbf R$, we reduce
ourself to the case when $\alpha = -\beta <0$. Let $\rho _0>\rho$
be chosen such that
$$
\rho _0\beta = \frac {\rho _0(\beta -\alpha )}2 < \frac \pi 2 ,
$$
and let $\theta = \cos (\rho _0\beta )>0$ and
$$
H_\ep (z) = F(z)e^{-\ep z^{\rho _0}},\qquad \ep >0.
$$
Then $|\Arg (z^{\rho _0})|<\frac \pi 2$ which gives
$$
|H_\ep (z)| = |F(z)| |e^{-\ep z^{\rho _0}}| \le |F(z)| |e^{-\ep \theta |z|^{\rho _0}}|.
$$

\par

This gives
$$
|H_\ep (z)|
\le
Ce^{r|z|^\rho -\ep \theta |z|^{\rho _0}}\to 0
\quad \text{as}\quad
z\in \overline \Omega \ \text{and}\ |z|\to \infty ,
$$
because $\rho _0>\rho$. Hence for some $R_0>0$ we have
\begin{equation}\label{Eq:hepEst1}
|H_\ep (z)| <M
\quad \text{as}\quad
z\in \overline \Omega \ \text{and}\ |z|\ge R_0 .
\end{equation}

\par

If $\Omega _0$ is the set of all $z\in \Omega$ such that $|z|\le R_0$,
then $H_\ep$ is analytic in $\Omega _0$, continuous on
$\overline {\Omega _0}$, and $|H_\ep (z)|< M$ on the boundary
$\partial \Omega _0$ of $\Omega _0$. By the maximum modulus
principle we obtain
\begin{equation}\label{Eq:hepEst2}
|H_\ep (z)| <M
\quad \text{as}\quad
z\in \overline \Omega \ \text{and}\ |z|\le R_0 ,
\end{equation}
and a combination of \eqref{Eq:hepEst1} and \eqref{Eq:hepEst2}
shows that
$$
|H_\ep (z)| <M
\quad \text{as}\quad
z\in \overline \Omega .
$$
By letting $\ep$ tending to $0$ we finally
get \eqref{Eq:PhragmLindel}.
\end{proof}

\par

We have now the following multi-dimensional version of
the previous result.

\par

\begin{prop}\label{Prop:PhragmLindelMultDim}
Let $\alpha _j,\beta _j,\rho _j\in \mathbf R$ be such that
$$
\beta _j> \alpha _j,\quad \rho _j>0\quad \text{and} \quad
\rho _j(\beta _j-\alpha _j) <\pi ,\qquad j=1,\dots ,d.
$$
Also let $F$ be analytic in
$$
\Omega = \sets {z\in \cc d}{\alpha _j<\Arg (z_j)<\beta _j, \ j=1,\dots ,d}
$$
and continuous on $\overline{\Omega}$ such that
\begin{alignat*}{2}
|F(z)| &\le M,  &\quad \Arg (z_j) &\in \{ \alpha _j,\beta _j\},
\intertext{for $j=1,\dots ,d$, and}
|F(z)| &\le C\prod _{j=1}^de^{r_j|z _j|^{\rho _j}},  & \quad z &\in \Omega ,
\end{alignat*}
for some constants $C,M,r_j\ge 0$, $j=1,\dots ,d$. Then
\begin{equation}\label{Eq:PhragmLindelMultDim}
|F(z)| \le M, \quad z\in \overline \Omega .
\end{equation}
\end{prop}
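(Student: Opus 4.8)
I would prove the proposition by induction on the dimension $d$, the base case $d=1$ being precisely Proposition \ref{Prop:PhragmLindel} (its hypotheses reduce to the present ones when only one sector is present). For the inductive step, write $\Omega = \Omega '\times \Gamma _d$ with $\Omega ' = \Gamma _1\times \cdots \times \Gamma _{d-1}$, and note carefully that the boundedness hypothesis furnishes $|F|\le M$ \emph{only} on the distinguished boundary $\partial \Gamma _1\times \cdots \times \partial \Gamma _d$, i.e. where every $\Arg (z_j)$ equals $\alpha _j$ or $\beta _j$ simultaneously. The strategy is to free one variable at a time: first upgrade the bound from the distinguished boundary to the larger face on which the last variable is unconstrained, and then invoke the $(d-1)$-dimensional case with the last variable frozen.

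For the first step, fix $z_1,\dots ,z_{d-1}$ with $\Arg (z_j)\in \{ \alpha _j,\beta _j\}$ for $j=1,\dots ,d-1$ and study $w\mapsto F(z_1,\dots ,z_{d-1},w)$ on $\Gamma _d$. When $\Arg (w)\in \{ \alpha _d,\beta _d\}$ the point $(z_1,\dots ,z_{d-1},w)$ lies on the distinguished boundary, so $|F|\le M$ there, while the global estimate gives $|F(z_1,\dots ,z_{d-1},w)|\le C'e^{r_d|w|^{\rho _d}}$, the constant $C'$ absorbing the now fixed factors $\prod _{j<d}e^{r_j|z_j|^{\rho _j}}$. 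Since $\rho _d(\beta _d-\alpha _d)<\pi$, the one-dimensional Proposition \ref{Prop:PhragmLindel} applies in $w$ and yields $|F(z_1,\dots ,z_{d-1},w)|\le M$ for every $w\in \Gamma _d$. Thus $|F|\le M$ on $\partial \Gamma _1\times \cdots \times \partial \Gamma _{d-1}\times \Gamma _d$.

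In the second step I freeze $z_d$ in the open sector $\Gamma _d$ and consider $G(z') = F(z',z_d)$ on $\Omega '$. It is analytic on $\Omega '$ and continuous on $\overline{\Omega '}$; its distinguished-boundary estimate $|G|\le M$ on $\partial \Gamma _1\times \cdots \times \partial \Gamma _{d-1}$ is exactly what the first step delivers, and the global bound gives $|G(z')|\le C''\prod _{j=1}^{d-1}e^{r_j|z_j|^{\rho _j}}$ with $C''=Ce^{r_d|z_d|^{\rho _d}}$. Because $\rho _j(\beta _j-\alpha _j)<\pi$ for $j\le d-1$, the inductive hypothesis in dimension $d-1$ gives $|G|\le M$ throughout $\overline{\Omega '}$. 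Letting $z_d$ range over $\Gamma _d$ and then passing to $\overline{\Gamma _d}$ by continuity, we obtain $|F|\le M$ on all of $\overline \Omega$, closing the induction.

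I expect the one genuinely delicate point to be the analyticity used in the first step: there $z_1,\dots ,z_{d-1}$ sit on the boundary rays of their sectors, so $w\mapsto F(z_1,\dots ,z_{d-1},w)$ is a priori only continuous, not obviously holomorphic. The remedy is to write it as the locally uniform limit (on compact subsets of $\Gamma _d$) of the functions $w\mapsto F(\zeta _1,\dots ,\zeta _{d-1},w)$ obtained by pushing each $z_j$ to a nearby $\zeta _j$ in the \emph{open} sector $\Gamma _j$; each of these is holomorphic on $\Gamma _d$, and continuity of $F$ on $\overline \Omega$ guarantees locally uniform convergence, so the limit is holomorphic. (In the intended application $F=\mathfrak V_df$ is entire, so this subtlety disappears altogether; I record it only because the proposition is phrased for general $F$ analytic on $\Omega$ and continuous on $\overline \Omega$.) Apart from this, the argument is pure bookkeeping together with the two inputs, namely Proposition \ref{Prop:PhragmLindel} and the lower-dimensional case.
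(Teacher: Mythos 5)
Your argument is correct and is essentially the paper's own proof: both iterate the one-dimensional Phragm{\'e}n--Lindel{\"o}f theorem (Proposition \ref{Prop:PhragmLindel}) to free one coordinate at a time starting from the distinguished boundary, the only difference being that you organize this as an induction on the dimension $d$ while the paper runs a single induction on the number $k$ of freed coordinates inside fixed dimension (after an inessential normalization to $\rho _j=2$ and $r_j=r$). The analyticity subtlety you flag --- that $w\mapsto F(z_1,\dots ,z_{d-1},w)$ is a priori only continuous on the open sector when the frozen variables sit on boundary rays --- is genuine, is passed over silently in the paper's proof, and your locally uniform approximation from the open sectors patches it correctly.
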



\par

\begin{proof}
We shall argue with induction and apply Phragm{\'e}n-Lindel{\"o}f's
theorem of one dimension in the induction steps.

\par

If we choose
$\theta _1,\dots ,\theta _d\in \mathbf R$
in suitable ways and consider
$$
F(e^{i\theta _1}z_1, \dots ,e^{i\theta _d}z_d),
$$
we reduce ourself to the case when
$\alpha _j=-\beta _j$ and $\beta _j>0$. By considering
$$
F\big (z_1^{2/{\rho _1}}, \cdots ,z_d^{2/{\rho _d}} \big ),
$$
and letting $r=\max _{1\le j\le d}(r_j)$, we then reduce ourself
to the case when 
$$
\rho _1=\cdots =\rho _d=2, \quad r_1=\cdots =r_d=r>0
\quad \text{and}\quad
\beta _j<\frac \pi 4.
$$
(Here we assume that the branch cut for logarithms are
the negative real axis.)

\par

We have
$$
\overline \Omega
=
\Gamma _1\times \cdots \times \Gamma _d,
\quad \text{and let}\quad
\Omega _0
=\partial \Gamma _1\times \cdots
\times \partial \Gamma _d,
$$
where
$$
\Gamma _j = \sets {z\in \mathbf C}
{\alpha _j\le \Arg (z)\le \beta _j},
\quad j=1,\dots ,d.
$$
Then it follows from the assumptions that
$$
|F(z)|\lesssim e^{r|z|^2},\ z\in \Omega,
\quad \text{and}\quad |F(z)|\le M,\
z\in \Omega _0,
$$

\par

For any $k\in \{ 1,\dots ,d\}$, let
$$
\Omega _k = \Gamma _1\times \cdots \times \Gamma _k
\times \partial \Gamma _{k+1}\times \cdots \times
\partial \Gamma _d,
$$
$$
w_k=(z_1,\dots ,z_{k-1},z_{k+1},\dots ,z_d)
\in \cc {d-1}.
$$
and let
$$
G_k(z_k) = F(z)=F(z_1,\dots ,z_d)
$$
when $w_k$ is fixed. We claim that
$$
|F(z)|\le M,\quad z\in \Omega _k,
$$
for every $k\in \{ 1,\dots ,d\}$. This in
turn gives the assertion, by choosing $k=d$.

\par

First suppose that $k=1$ and let
$$
w_1 \in \partial \Gamma _2\times \cdots
\times \partial \Gamma _d \subseteq \cc {d-1}
$$
be fixed.
Then it follows from the assumptions that
$G_1$ is analytic on $\Gamma _1$ and satisfies
$$
|G_1(z_1)|\le
C_{w_1}e^{r_0|z_1|^2},\ z_1\in \Gamma _1
\quad \text{and}\quad |G_1(z_1)|
\le M,\ z_1\in \partial \Gamma _1,
$$
for some constant $C_{w_1}$ which only depends
on $w_1$.
By Phragm{\'e}n-Lindel{\"o}f's theorem (Proposition
\ref{Prop:PhragmLindel}) we get
$|G_1(z_1)|\le M$ when $z_1\in \Gamma _1$,
which is the same as
$$
|F(z)|\le M,\quad z\in \Omega _1,
$$
since $w_1$ is arbitrarily chosen. This gives
the claim for $k=1$.

\par

Suppose that the claim holds true for
$k\in \{1,\dots ,d-1\}$ and prove the claim
for $n=k+1$. Let
$$
w_n\in \Gamma _1\times \cdots \times \Gamma _k
\times \partial \Gamma _{k+2}\times \cdots
\times \partial \Gamma _d \subseteq \cc {d-1}.
$$
be fixed. Then
$$
|G_n(z_n)| \le C_{w_n}e^{r_0|z_n|^2},\
z_n\in \Gamma _n
\quad \text{and}\quad
|G_n(z_n)| \le M,\ z_n\in \partial \Gamma _n.
$$
Here the latter inequality follows from
the induction hypothesis. By Phragm{\'e}n-Lindel{\"o}f's
theorem we get
$|G_n(z_n)|\le M$ when $z_n\in \Gamma _n$,
which is the same as
$$
|F(z)|\le M,\quad z\in \Omega _n,
$$
since $w_n$ is arbitrarily chosen.
This gives the claim and thereby the assertion.
\end{proof}

\par

\section{Links between the Bargmann transform,
fractional Fourier transforms and compositions with
short-time Fourier transform}
\label{AppB}

\par

We recall that if $\mascF _t$ denotes the fractional Fourier transform
of order $t\in \rr d$ and
$f\in \maclH _0'(\rr d)$, then
\begin{equation}\label{Eq:BargmannFracFour}
(\mathfrak V_d(\mascF _tf))(z) = (\mathfrak V_df)(e^{-it_1\frac \pi2}z_1,\dots ,
e^{-it_d\frac \pi2}z_d),
\end{equation}
and the link between the Bargmann transform and 
short-time Fourier transform is
\begin{equation}\label{Eq:BargmannSTFT}
\begin{aligned}
V_\phi f(x,\xi ) &= (2\pi )^{-\frac d2}e^{-\frac 14|z|^2}e^{-\frac i2\scal x\xi}
(\mathfrak V_df)(2^{-\frac 12}\overline z),
\\[1ex]
z &= x+i\xi ,\ x,\xi \in \rr d
\end{aligned}
\end{equation}
(see (1.28) in \cite{To11} and )
A combination of these relations give
\begin{equation}\label{Eq:BargmannSTFT2}
\begin{aligned}
(V_\phi (\mascF _tf))(x,\xi )
&= (2\pi )^{-\frac d2}e^{-\frac 14|z|^2}e^{-\frac i2\scal x\xi}
(\mathfrak V_df)(2^{-\frac 12}\overline {U_{d,t}(z)}),
\\[1ex]
U_{d,t}(z) &=(U_{t_1}(z_1),\dots ,U_{t_d}(z_d))
\\[1ex]
U_{t_j}(z_j)
&=
(\cos (t_j\textstyle{\frac \pi 2})x_j + \sin (t_j\textstyle{\frac \pi 2})\xi _j )
+i(-\sin (t_j\textstyle{\frac \pi 2})x_j +\cos (t_j\textstyle{\frac \pi 2})\xi _j),
\\[1ex]
z &= x+i\xi ,\ x,\xi \in \rr d,
\end{aligned}
\end{equation}
and
\begin{equation}\label{Eq:STFTFracFour}
\begin{aligned}
(V_\phi (\mascF _tf))(x,\xi )
&= 
e^{\frac i4(
\Phi _{t_1}(x_1,\xi _1)
+\cdots +\Phi _{t_d}(x_d,\xi _d))}
V_\phi f(A_{t_1}(x_1,\xi _1),
\dots ,A_{t_d}(x_d,\xi _d)),
\\[1ex]
\Phi _{t_j}(x _j,\xi _j)
&=
\sin (2t_j\textstyle{\frac \pi 2})(\xi _j^2-x_j^2)
+2(\cos (2t_j\textstyle{\frac \pi 2}) -1)x_j\xi _j ,
\\[1ex]
A_{t_j}(x _j,\xi _j)
&=
(\cos (t_j\textstyle{\frac \pi 2})x_j + \sin (t_j\textstyle{\frac \pi 2})\xi _j,
-\sin (t_j\textstyle{\frac \pi 2})x_j +\cos (t_j\textstyle{\frac \pi 2})\xi _j),
\\[1ex]
x,\xi &\in \rr d.
\end{aligned}
\end{equation}

\par

\end{document}